\newtheorem{Theorem}{Theorem}[section]
\newtheorem{Proposition}[Theorem]{Proposition}
\newtheorem{Lemma}[Theorem]{Lemma}
\newtheorem{Corollary}[Theorem]{Corollary}
\newtheorem*{Claim}{Claim}
\theoremstyle{definition}
\newtheorem{Definition}[Theorem]{Definition}
\newcommand{\rca}{\mathsf{RCA}_0}
\newcommand{\wkl}{\mathsf{WKL}_0}
\newcommand{\aca}{\mathsf{ACA}_0}
\newcommand{\bso}{\mathsf{B}\Sigma^0_1}
\newcommand{\iso}{\mathsf{I}\Sigma^0_1}
\newcommand{\ipo}{\mathsf{I}\Pi^0_1}
\DeclareMathOperator{\bang}{{!}}
\newcommand{\rst}{{\restriction}}
\newcommand{\andd}{\wedge}
\newcommand{\orr}{\vee}
\newcommand{\la}{\langle}
\newcommand{\ra}{\rangle}
\newcommand{\imp}{\rightarrow}
\newcommand{\biimp}{\leftrightarrow}
\newcommand{\Biimp}{\Leftrightarrow}
\newcommand{\Nb}{\mathbb{N}}
\newcommand{\Rb}{\mathbb{R}}
\newcommand{\smf}{\smallfrown}
\newcommand{\leqKB}{\leq_\mathrm{KB}}
\newcommand{\leKB}{<_\mathrm{KB}}
\newcommand{\geKB}{>_\mathrm{KB}}
\newcommand{\geqKB}{\geq_\mathrm{KB}}
\newcommand{\Pf}{\mathcal{P}_\mathrm{f}}
\newcommand{\MP}[1]{\mathcal{#1}}
   \def\MR#1{}
\title{The strength of compactness for countable complete linear orders}
\author{Paul Shafer}
\address{School of Mathematics\\
University of Leeds\\
Leeds\\
LS2 9JT\\
United Kingdom}
\email{p.e.shafer@leeds.ac.uk}
\urladdr{http://www1.maths.leeds.ac.uk/~matpsh/}
\date{\today}
\begin{document}
 
\begin{abstract}
We investigate the statement ``the order topology of every countable complete linear order is compact'' in the framework of reverse mathematics, and we find that the statement's strength depends on the precise formulation of compactness.  If we require that open covers must be uniformly expressible as unions of basic open sets, then the compactness of complete linear orders is equivalent to $\wkl$ over $\rca$.  If open covers need not be uniformly expressible as unions of basic open sets, then the compactness of complete linear orders is equivalent to $\aca$ over $\rca$.  This answers a question of Fran\c{c}ois Dorais.
\end{abstract}

\maketitle

\section{Introduction}

Every linear order $(L, \prec)$ can be equipped with its order topology, where the basic open sets are the open intervals
\begin{align*}
(a, \infty) &= \{x \in L : a \prec x\} \text{ for $a \in L$;}\\
(-\infty, b) &= \{x \in L : x \prec b\} \text{ for $b \in L$;}\\
(a, b) &= \{x \in L : a \prec x \prec b\} \text{ for $a, b \in L$ with $a \prec b$.}
\end{align*}

Call a linear order $(L, \prec)$ \emph{complete} if whenever $L$ is partitioned as $L = A^- \cup A^+$ with
\begin{align*}
(\forall x \in A^-)(\forall y \in A^+)(x \prec y),
\end{align*}
it is the case that either $A^-$ has a greatest element or $A^+$ has a least element.  It is well-known (see for example \cite[Theorem~27.1]{Munkres}) that the order topology of a non-empty linear order is compact if and only if the linear order is complete in the above sense.  Our goal is to characterize the logical strength of this fact when restricting to countable linear orders.

This work is an example of \emph{reverse mathematics}, which is the project of classifying mathematical theorems phrased in second-order arithmetic by the strengths of the axiom systems that are required to prove the theorems.  Reverse mathematics was introduced by H.\ Friedman~\cite{Friedman}, and the standard reference is Simpson's~\cite{SimpsonSOSOA}.  Formally, the only mathematical objects that second-order arithmetic allows are natural numbers and sets of natural numbers.  Nevertheless, straightforward coding techniques available in weak background theories allow us to discuss tuples and sequences of natural numbers; functions $f \colon \Nb^m \imp \Nb^n$; countable algebraic or combinatorial objects such as countable groups, rings, fields, graphs, trees, partial orders, linear orders; and more.  By coding a real number as a rapidly converging Cauchy sequence of rational numbers and by coding a basic open subset of $\Rb^n$ as an $(n+1)$-tuple of rational numbers (representing an open ball of rational radius whose center has rational coordinates), we may discuss $\Rb^n$, its topology, continuous functions $f \colon \Rb^m \imp \Rb^n$, and so forth.  We may even discuss arbitrary complete separable metric spaces by specifying a space's countable dense set and a metric on that dense set.  To date, most work in reverse mathematics involving topology has been confined to complete separable metric spaces.  Few attempts have been made to study general topology in second-order arithmetic.  One excellent example is Mummert's and Simpson's work on filter spaces~\cites{MummertMF, MummertSimpson}.  Here we take up Dorais's framework of \emph{countable second-countable topological spaces}~\cite{Dorais}.

Dorais's idea is to study general topology in second-order arithmetic by restricting to the topological spaces that can be straightforwardly represented in second-order arithmetic.  These are the countable second-countable spaces:  the topological spaces with countably many points and with countable bases.  The countable second-countable spaces framework of course comes with its own limitations (it is not a good approach to studying connectivity, for example), but it has the advantage of representing topological notions directly, and it works well for analyzing theorems whose proofs largely concern the combinatorics of open sets and closed sets.  This framework was used in~\cite{NoetherianSpaces} to analyze topological spaces arising from quasi-orders, for example.  Here we study countable linear orders and their order topologies.  This could be done in a completely \emph{ad hoc} manner, as a linear order's order topology is easy to describe.  However, the order topologies of countable linear orders fit very nicely into the countable second-countable spaces framework, as shown by the many examples in~\cite{Dorais}.

In~\cite{Dorais}, Dorais considers a notion of compactness in which the open sets of an open cover are explicitly presented as unions of basic open sets.  We call this notion \emph{compactness with respect to honest open covers}.  The idea is that a sequence of open sets is honest if it comes with an explanation of how each open set in the sequence can be written as a union of basic open sets.  Dorais observes that the base theory $\rca$ proves that if the order topology of a countable linear order is compact with respect to honest open covers, then that linear order is complete (see Lemma~\ref{lem-CompactImpComplete} below).  He then asks for the axiomatic strength of the converse, that is, for the strength of the statement ``the order topology of every countable complete linear order is compact with respect to honest open covers.''  Dorais shows that $\rca$ does not suffice to prove this statement.  In fact, he shows that, over $\rca$, the statement implies that there is no set of maximum Turing degree~\cite[Example~7.8]{Dorais}.  We answer Dorais's question by showing that the statement ``the order topology of every countable complete linear order is compact with respect to honest open covers'' is equivalent to $\wkl$ over $\rca$.

Dorais also considers a stronger notion of compactness, here simply called \emph{compactness}, where the open sets of an open cover need not be uniformly presentable as unions of basic open sets~\cite{DoraisPC}.  We show that with this notion of compactness, the statement ``the order topology of every countable complete linear order is compact'' is equivalent to $\aca$ over $\rca$.

\section{Preliminaries for working in second-order arithmetic}

We remind the reader of the axiom systems $\rca$, $\wkl$, and $\aca$.  Simpson's~\cite{SimpsonSOSOA} provides many more details concerning these and other systems, including many examples of theorems that can be proven in them.

The language of second-order arithmetic contains two sorts of variables:  first-order variables intended to range over the natural numbers and second-order variables intended to range over sets of natural numbers.  Typically, but not always, lower-case letters $a$, $b$, $c$, $x$, $y$, $z$, etc.\ denote first-order variables, and capital letters $A$, $B$, $C$, $X$, $Y$, $Z$, etc.\ denote second-order variables.  The symbol $\Nb$ is notational shorthand for the first-order part of whatever structure is implicitly under consideration.

The language of second-order arithmetic contains constant symbols $0$ and $1$, binary function symbols $+$ and $\times$, and binary relation symbols $=$, $<$, and $\in$.  The constants $0$ and $1$ name numbers, and the functions and relations $+$, $\times$, $=$, and $<$ only apply to numbers.  The relation $\in$ relates numbers to sets, and equality between sets is defined in terms of $\in$.

$\rca$ (standing for \emph{recursive comprehension axiom}) is an axiom system designed to capture computable mathematics.  Roughly speaking, to prove that some set exists when working in $\rca$, one must show how to compute that set, possibly using as an oracle some other set that has already been shown to exist.  The axioms of $\rca$ consist of
\begin{itemize}
\item a first-order sentence expressing that the numbers form a discretely ordered commutative semi-ring with identity; 

\smallskip

\item the \emph{$\Sigma^0_1$ induction scheme} (denoted $\iso$), which consists of the universal closures (by both number and set quantifiers) of all formulas of the form
\begin{align*}
[\varphi(0) \andd \forall n(\varphi(n) \imp \varphi(n+1))] \imp \forall n \varphi(n),
\end{align*}
where $\varphi$ is $\Sigma^0_1$; and 

\smallskip

\item the \emph{$\Delta^0_1$ comprehension scheme}, which consists of the universal closures (by both number and set quantifiers) of all formulas of the form
\begin{align*}
\forall n (\varphi(n) \biimp \psi(n)) \imp \exists X \forall n(n \in X \biimp \varphi(n)),
\end{align*}
where $\varphi$ is $\Sigma^0_1$, $\psi$ is $\Pi^0_1$, and $X$ is not free in $\varphi$.
\end{itemize}
$\rca$ is the usual base system or background theory in reverse mathematics.  Many theorems in reverse mathematics have the form $\rca \vdash \varphi \biimp \psi$, where $\varphi$ and $\psi$ are two well-known mathematical statements.  If $\rca \vdash \varphi \biimp \psi$, we say that $\varphi$ and $\psi$ are equivalent over $\rca$, and we interpret this as meaning that $\varphi$ and $\psi$ have equivalent logical strength.  The `$0$' in `$\rca$' refers to the restriction of the induction scheme to $\Sigma^0_1$ formulas.

$\rca$ proves several induction schemes, least element principles, bounding schemes, and bounded comprehension principles in addition to $\iso$.  Here the relevant schemes are the \emph{$\Pi^0_1$ induction scheme} (denoted $\ipo$), the \emph{$\Sigma^0_1$ least element principle}, the \emph{$\Sigma^0_1$ bounding scheme} (denoted $\bso$), and the \emph{bounded $\Sigma^0_1$ comprehension scheme}.  The $\Pi^0_1$ induction scheme is as the $\Sigma^0_1$ induction scheme, but the formula $\varphi$ is required to be $\Pi^0_1$.  The $\Sigma^0_1$ least element principle consists of the universal closures of all formulas of the form
\begin{align*}
\exists n \varphi(n) \imp \exists n[\varphi(n) \andd (\forall m < n)(\neg\varphi(m))],
\end{align*}
where $\varphi$ is $\Sigma^0_1$.  The $\Sigma^0_1$ bounding scheme consists of the universal closures of all formulas of the form
\begin{align*}
\forall a[(\forall n < a)(\exists m)\varphi(n,m) \imp \exists b(\forall n < a)(\exists m < b)\varphi(n,m)],
\end{align*}
where $\varphi$ is $\Sigma^0_1$ and $a$ and $b$ are not free in $\varphi$.  The $\Sigma^0_1$ bounded comprehension scheme consists of the universal closures of all formulas of the form
\begin{align*}
\forall b \exists X \forall n[n \in X \biimp (n < b \andd \varphi(n))],
\end{align*}
where $\varphi$ is $\Sigma^0_1$ and $X$ is not free in $\varphi$.  See~\cite[Section~I.2]{HajekPudlak} and \cite[Section~II.3]{SimpsonSOSOA} for further details.

$\rca$ suffices to code a finite set of numbers as a single number and a finite sequence of numbers as a single number in the usual way.  In $\rca$, we can thus code the set $\Nb^{<\Nb}$ of all finite sequences as well as its subset $2^{<\Nb}$ of all finite binary sequences.  We now fix our notation and terminology for (coded) sequences.  For $\sigma, \tau \in \Nb^{<\Nb}$, $|\sigma|$ denotes the length of $\sigma$, $\sigma \subseteq \tau$ denotes that $\sigma$ is an initial segment of $\tau$, and $\sigma^\smf\tau$ denotes the concatenation of $\sigma$ and $\tau$.  For $\sigma \in \Nb^{<\Nb}$ and $n \leq |\sigma|$, $\sigma \rst n = \la \sigma(0), \dots, \sigma(n-1) \ra$ denotes the initial segment of $\sigma$ of length $n$.  Likewise, if $f \colon \Nb \imp \Nb$ is a function and $n \in \Nb$, $f \rst n = \la f(0), \dots, f(n-1) \ra$ denotes the sequence consisting of the first $n$ values of $f$.

In $\rca$, we define a \emph{tree} to be a set $T \subseteq \Nb^{<\Nb}$ that is closed under initial segments:  $\forall \sigma \forall \tau [(\sigma \in T \andd \tau \subseteq \sigma) \imp \tau \in T]$.  A function $f \colon \Nb \imp \Nb$ is an \emph{infinite path} through a tree $T$ if every initial segment of $f$ is in $T$:  $\forall n (f \rst n \in T)$.  We can now define \emph{weak K\"onig's lemma} to be the statement ``every infinite subtree of $2^{<\Nb}$ has an infinite path.''  The system $\wkl$ is obtained by adding weak K\"onig's lemma to the axioms of $\rca$.

Lastly, the system $\aca$ (standing for \emph{arithmetical comprehension axiom}) is obtained by augmenting $\rca$ by the \emph{arithmetical comprehension scheme}, which consists of the universal closures of all formulas of the form
\begin{align*}
\exists X \forall n(n \in X \biimp \varphi(n)),
\end{align*}
where $\varphi$ is an arithmetical formula in which $X$ is not free.

$\wkl$ is strictly stronger than $\rca$, and $\aca$ is strictly stronger than $\wkl$ (see~\cite[Remark~I.10.2 and Section~VIII.2]{SimpsonSOSOA}).  A helpful characterization of $\aca$ is that, over $\rca$, it is equivalent to the statement ``every injection has a range.''
\begin{Lemma}[{\cite[Lemma~III.1.3]{SimpsonSOSOA}}]\label{lem-ACAinjection}
The following are equivalent over $\rca$.
\begin{enumerate}[(i)]
\item $\aca$.

\smallskip

\item If $f \colon \Nb \imp \Nb$ is an injection, then there is a set $X$ such that $\forall n(n \in X \biimp \exists s(f(s) = n))$.
\end{enumerate}
\end{Lemma}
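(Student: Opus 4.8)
The plan is to prove the two implications separately, with essentially all of the content lying in (ii)$\imp$(i). The implication (i)$\imp$(ii) is immediate: given an injection $f \colon \Nb \imp \Nb$, the formula $\exists s(f(s) = n)$ is $\Sigma^0_1$ and hence arithmetical, so the set $X = \{n : \exists s(f(s) = n)\}$ exists by the arithmetical comprehension scheme. Note that injectivity of $f$ is not even used in this direction.

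For (ii)$\imp$(i), I would first reduce arithmetical comprehension to $\Sigma^0_1$ comprehension, that is, to the claim that $\{n : \varphi(n)\}$ exists for every $\Sigma^0_1$ formula $\varphi$. This reduction is standard: $\Sigma^0_1$ comprehension with a set parameter $Y$ yields the Turing jump of $Y$, and by an external induction on $k$ one obtains $\Sigma^0_k$ comprehension for every standard $k$ by iterating the jump, which collectively gives the full arithmetical comprehension scheme. So it suffices to derive $\Sigma^0_1$ comprehension from (ii).

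To that end, fix a $\Sigma^0_1$ formula $\varphi(n) \equiv \exists m\,\theta(n,m)$ with $\theta$ bounded. The key step is to build, within $\rca$, an injection $f \colon \Nb \imp \Nb$ whose range encodes $\{n : \varphi(n)\}$. I would define $f$ by recursion over stages: at stage $s$, search for the least $n$ possessing a witness $m \leq s$ with $\theta(n,m)$ such that $2n$ has not already been enumerated; if such an $n$ exists, set $f(s) = 2n$, and otherwise set $f(s)$ equal to the least odd number not yet enumerated. The odd-number ``padding'' guarantees that $f$ is total and injective no matter how many $n$ satisfy $\varphi$, while the bookkeeping ensures that each even value is output at most once. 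Using $\Sigma^0_1$ induction, I would verify that $f$ is a total injection and that, for every $n$, $2n \in \ran(f)$ if and only if $\varphi(n)$ holds. Since $f$ is an injection provably existing in $\rca$, its range $\ran(f)$ exists by (ii), and then $\{n : \varphi(n)\} = \{n : 2n \in \ran(f)\}$ exists by $\Delta^0_1$ comprehension.

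I expect the main obstacle to be the careful verification that the stagewise construction yields a genuine function provably total in $\rca$, and in particular the argument --- via the $\Sigma^0_1$ least element principle together with $\Sigma^0_1$ induction --- that every $n$ with $\varphi(n)$ eventually has $2n$ enumerated despite competition from smaller indices. The padding trick forcing totality and injectivity even when $\{n : \varphi(n)\}$ happens to be finite is the decisive idea that makes hypothesis (ii), which speaks only of injections, strong enough to recover an arbitrary $\Sigma^0_1$ set.
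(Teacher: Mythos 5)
First, a remark on the comparison itself: the paper does not prove this lemma at all --- it is quoted directly from Simpson (Lemma~III.1.3 of \cite{SimpsonSOSOA}) --- so your proposal can only be measured against the standard argument, which is indeed the one you are reconstructing: reduce arithmetical comprehension to $\Sigma^0_1$ comprehension by externally iterating the jump, then code an arbitrary $\Sigma^0_1$ set into the range of an injection, using even values for the coded set and odd values as padding. That overall strategy is correct, your direction (i)$\imp$(ii) is fine, and the reduction to $\Sigma^0_1$ comprehension is the standard one.

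However, your construction of $f$ has a genuine defect as written. At stage $s$ you ``search for the least $n$ possessing a witness $m \leq s$'' such that $2n$ has not yet been enumerated, where $n$ ranges over all of $\Nb$. Whether such an $n$ exists at all is then a $\Sigma^0_1$ question (the quantifier on $n$ is unbounded), so the case distinction defining $f(s)$ is not $\Delta^0_1$ in $f \rst s$; the graph of $f$ is not $\Sigma^0_1$, and $\rca$ cannot prove that this $f$ exists --- in general the function you describe is not even computable from the given data, which defeats the purpose of the construction. The repair is standard and small: restrict the search to $n \leq s$. Then the recursion step is $\Delta^0_1$, so $f$ exists in $\rca$ by primitive recursion, and your ``competition'' argument goes through: if $\varphi(n)$ holds with least witness $m$, then from stage $\max(n,m)$ onward each stage enumerates a distinct even value $2n'$ with $n' \leq n$ until $2n$ itself appears, so $2n$ is enumerated within $n+1$ further stages by pigeonhole. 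Better still, you can dispense with the recursion and the verification induction entirely via the least-witness trick of Simpson's actual proof: set $f(\la n, m \ra) = 2n$ if $\theta(n,m)$ holds and $(\forall m' < m)\,\neg\theta(n,m')$, and $f(\la n, m \ra) = 2\la n, m \ra + 1$ otherwise. This $f$ is manifestly total, $\Delta^0_1$, and injective (each $2n$ is output at most once because least witnesses are unique, and the odd padding values are pairwise distinct), and $2n \in \ran(f)$ if and only if $\varphi(n)$, with no stagewise bookkeeping to verify.
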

Thus to show that some statement implies $\aca$ over $\rca$, it suffices to show that the statement implies that every injection has a range.

We make use of \emph{K\"onig's lemma} and \emph{bounded K\"onig's lemma} in addition to weak K\"onig's lemma.  Call a tree $T \subseteq \Nb^{<\Nb}$ \emph{finitely-branching} if every $\sigma \in T$ has at most finitely many immediate successors:  $(\forall \sigma \in T)(\exists n)(\forall m)[\sigma^\smf\la m \ra \in T \imp m < n]$.  Furthermore, call a finitely-branching tree $T \subseteq \Nb^{<\Nb}$ \emph{bounded} if it comes equipped with a function bounding its branching, i.e., if there is a function $g \colon \Nb \imp \Nb$ such that $(\forall \sigma \in T)(\forall n < |\sigma|)[\sigma(n) < g(n)]$.  K\"onig's lemma is the statement ``every infinite finitely-branching subtree of $\Nb^{<\Nb}$ has an infinite path,'' and bounded K\"onig's lemma is the statement ``every infinite bounded subtree of $\Nb^{<\Nb}$ has an infinite path.''

\begin{Theorem}\label{thm-KLequivs}{\ }
\begin{enumerate}[(i)]
\item\label{it-fullKL} K\"onig's lemma is equivalent to $\aca$ over $\rca$ (see \cite[Theorem~III.7.2]{SimpsonSOSOA}).

\smallskip

\item\label{it-bddKL} Bounded K\"onig's lemma is equivalent to $\wkl$ over $\rca$ (see \cite[Lemma~IV.1.4]{SimpsonSOSOA}).
\end{enumerate}
\end{Theorem}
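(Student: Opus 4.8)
The plan is to establish each of the two equivalences by proving both implications separately, handling the four implications in roughly increasing order of difficulty. Two of them are essentially immediate. For part~\eqref{it-bddKL}, bounded K\"onig's lemma at once implies weak K\"onig's lemma, since every infinite subtree of $2^{<\Nb}$ is bounded by the constant function $g(n) = 2$; thus weak K\"onig's lemma is literally the special case of bounded K\"onig's lemma for binary trees. For part~\eqref{it-fullKL}, the implication from $\aca$ to K\"onig's lemma is the standard ``leftmost infinite path'' construction. Given an infinite finitely-branching tree $T \subseteq \Nb^{<\Nb}$, I would use arithmetical comprehension to form the set $S = \{\sigma \in T : \forall k\, \exists \tau\, (\sigma \subseteq \tau \andd \tau \in T \andd |\tau| = |\sigma| + k)\}$ of nodes lying below infinitely much of $T$; the defining formula is arithmetical (indeed $\Pi^0_2$), so $S$ exists under $\aca$. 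The empty string lies in $S$ because $T$ is infinite, and if $\sigma \in S$ then, since $\sigma$ has only finitely many immediate successors but infinitely many extensions, at least one immediate successor must again lie in $S$. I would then define a path $f$ by the recursion sending $f \rst n$ to $(f \rst n)^\smf \la m \ra$ for the least $m$ with $(f \rst n)^\smf \la m \ra \in S$; this search terminates at each step precisely because such an $m$ is guaranteed to exist, and the resulting $f$ is an infinite path through $S \subseteq T$.

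The reverse implication of part~\eqref{it-fullKL}, that K\"onig's lemma implies $\aca$, is the crux and the main obstacle. By Lemma~\ref{lem-ACAinjection} it suffices to show that every injection $f \colon \Nb \imp \Nb$ has a range. The strategy is to attach to $f$ an infinite finitely-branching tree $T_f$ each of whose infinite paths explicitly codes the range of $f$, apply K\"onig's lemma to obtain such a path, and then recover the range by $\Delta^0_1$ comprehension relative to that path. The delicate part is engineering $T_f$ so that it is simultaneously infinite, finitely branching, and such that along any path one can read off, for each $n$, whether $n \in \ran(f)$. This is arranged by letting a node commit to ``$n \in \ran(f)$'' only when it carries an explicit witness $s$ with $f(s) = n$, and by bundling these witnesses into the node labels; finite branching then holds because only finitely many labels are consistent at each level, while the fact that these finite branching bounds are \emph{not} uniform in the level is exactly what prevents the construction from collapsing to weak K\"onig's lemma. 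Verifying these three properties carefully, and checking that the decoding of a path into the range is $\Delta^0_1$, is where the real work lies.

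Finally, for the remaining implication of part~\eqref{it-bddKL}, that $\wkl$ implies bounded K\"onig's lemma, I would encode a bounded tree into a binary tree. Given an infinite bounded tree $T \subseteq \Nb^{<\Nb}$ with branching function $g$, I would replace the value $v < g(i)$ occurring at level $i$ by the length-$g(i)$ binary block $0^v 1 0^{g(i)-v-1}$ containing a single $1$, and let $\widehat{T} \subseteq 2^{<\Nb}$ be the tree generated by the binary codes of the members of $T$ together with all their initial segments, subject to the constraint that every completed block contains exactly one $1$. Then $\widehat{T}$ is an infinite binary tree, so weak K\"onig's lemma supplies an infinite binary path, and decoding its blocks recovers an infinite path through $T$.

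The points requiring care here are the closure of $\widehat{T}$ under initial segments and the verification that partial, not-yet-completed blocks do not create spurious dead ends, so that a genuine infinite path results. I expect the two easy directions and the $\aca \imp {}$K\"onig's lemma argument to be routine, the $\wkl \imp {}$bounded K\"onig's lemma encoding to require only bookkeeping, and the design and verification of the tree $T_f$ witnessing K\"onig's lemma $\imp \aca$ to be the genuine difficulty.
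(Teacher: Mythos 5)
The paper offers no proof of Theorem~\ref{thm-KLequivs} at all: both parts are quoted directly from Simpson's book, so your proposal can only be compared against those standard proofs. Measured that way, your overall architecture is exactly the standard one, and three of the four implications are in good shape. Bounded K\"onig's lemma trivially implies weak K\"onig's lemma, since binary trees are bounded by the constant function $2$; $\aca$ implies K\"onig's lemma by the extendible-node, leftmost-path argument, which you sketch correctly; and your block coding of a bounded tree into a binary tree (one block of length $g(i)$ per level, exactly one $1$ per completed block) is the standard reduction and does work. Note, incidentally, that the ``spurious dead ends'' you worry about in that last argument are harmless: weak K\"onig's lemma only needs the coded tree to be infinite, and the block constraint already guarantees that every infinite path meets infinitely many block boundaries and hence decodes correctly.

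The genuine gap is in the direction you yourself identify as the crux, K\"onig's lemma $\imp$ $\aca$: you describe what the tree $T_f$ should accomplish but never define it, explicitly deferring ``the real work.'' In particular, your description covers only positive commitments (a node asserts $n \in \ran(f)$ by carrying a witness $s$ with $f(s) = n$) and says nothing about how a path certifies $n \notin \ran(f)$, which is the actual trick. The standard tree is: $\sigma \in T_f$ iff for all $i < |\sigma|$, either $\sigma(i) = s + 1$ for some $s$ with $f(s) = i$, or $\sigma(i) = 0$ and $f(s) \neq i$ for all $s < |\sigma|$. That second clause --- a ``no'' commitment that is re-verified up to the length of the node, so that a node claiming $i \notin \ran(f)$ dies as soon as a witness appears below its length --- is what makes membership in $T_f$ a $\Delta^0_1$ condition, makes $T_f$ infinite (for each $n$, the sequence recording exactly the witnesses found below $n$ lies in $T_f$), and forces any infinite path $g$ to satisfy $i \in \ran(f) \biimp g(i) > 0$, so that the range is $\Delta^0_1$ in $g$. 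Finite branching also needs the argument you omit: it holds precisely because $f$ is injective, so at level $i$ the only legal labels are $0$ and the unique $s+1$ with $f(s) = i$ --- at most two values per node, but with no uniform bound on their size, which is exactly why this yields full K\"onig's lemma rather than the bounded version. Without this clause and these verifications, your treatment of the reversal is a statement of intent rather than a proof.
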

 
\section{Countable second-countable topological spaces}

We introduce the countable second-countable topological spaces framework from~\cite{Dorais}.

\begin{Definition}[$\rca$; {\cite[Definition~2.1]{Dorais}}]
A \emph{strong base} (or simply \emph{base}) for a topology on a set $X$ is an indexed sequence $\MP U = (U_i)_{i \in I}$ of subsets of $X$ together with a function $k \colon X \times I \times I \imp I$ such that the following properties hold.
\begin{itemize}
\item If $x \in X$, then $x \in U_i$ for some $i \in I$.

\smallskip

\item If $x \in U_i \cap U_j$, then $x \in U_{k(x,i,j)} \subseteq U_i \cap U_j$.
\end{itemize}
\end{Definition}

\begin{Definition}[$\rca$; {\cite[Definition~2.2]{Dorais}}]
\label{def-CSCspace}
A \emph{strong countable second-countable space} (or simply \emph{countable second-countable space}) is a triple $(X, \MP U, k)$ where $\MP U = (U_i)_{i \in I}$ and $k \colon X \times I \times I \imp I$ form a base for a topology on the set $X$.
\end{Definition}

We have no use for the empty space, so we always assume that a countable second-countable space is non-empty.

Dorais also defines the notion of a \emph{weak base} for a topology on a set $X$ and the corresponding notion of a \emph{weak countable second-countable space}~\cite{DoraisPC}.  The distinction is that a weak base for a topology on $X$ is a uniformly enumerable sequence of subsets of $X$ rather than a sequence of literal subsets of $X$.  So in a weak base, membership in a basic open set is a $\Sigma^0_1$ property, whereas in a strong base, membership in a basic open set is a $\Delta^0_1$ property.  It is natural and straightforward to define a strong base for the order topology of a countable linear order in $\rca$ (see Definition~\ref{def-OrderedSpace} below), so in this work we need only consider strong bases and strong countable second-countable spaces.

Open subsets of countable second-countable spaces are coded by enumerations of indices of basic open sets.  Thus we must first define coded enumerable sets.

\begin{Definition}[$\rca$]\label{def-enum}
Let $A \subseteq \Nb$, and let $\Pf(A)$ denote the set of finite subsets of $A$.
\begin{itemize}
\item An \emph{enumerable} subset of $A$ is coded by a function $h \colon \Nb \imp \Pf(A)$, where $h$ codes $A_h = \bigcup_{n \in \Nb}h(n)$.

\smallskip

\item A sequence of \emph{uniformly enumerable} subsets of $A$ is coded by a function $h \colon \Nb \times \Nb \imp \Pf(A)$, where, for each $m$, $h(m, \cdot)$ codes the $m$\textsuperscript{th} set in the sequence: $A_{g(m, \cdot)} = \bigcup_{n \in \Nb}h(m,n)$.  Denote this sequence by $(A_{h(m, \cdot)} : m \in \Nb)$.
\end{itemize}
\end{Definition}

In general, $\aca$ is required to prove that $A_h = \bigcup_{n \in \Nb}h(n)$ exists as a set for every $A \subseteq \Nb$ and every $h \colon \Nb \imp \Pf(A)$.  Thus the expressions `$a \in A_h$' and `$a \in \bigcup_{n \in \Nb}h(n)$' must be interpreted as abbreviations for the formula `$\exists n (a \in h(n))$.'  The reason we consider functions $h \colon \Nb \imp \Pf(A)$ rather than functions $h \colon \Nb \imp A$ is that with functions $h \colon \Nb \imp \Pf(A)$, we may easily represent $\emptyset$ by the function with constant value $\emptyset$.

\begin{Definition}[$\rca$; {\cite[Definitions~2.3 and~2.4]{Dorais}}]\label{def-open}
Let $(X, \MP U, k)$ be a countable second-countable space, where $\MP U = (U_i)_{i \in I}$.  An \emph{effectively open} subset of $X$ is coded by an enumerable subset of $I$, i.e., by a function $h \colon \Nb \imp \Pf(I)$.  An $x \in X$ is a member of the effectively open subset of $X$ coded by $h$ if there are an $n \in \Nb$ and an $i \in h(n)$ such that $x \in U_i$.  Let $G_h = \bigcup_{n \in \Nb}\bigcup_{i \in h(n)}U_i$ denote the open subset of $X$ coded by $h$. 
\end{Definition}

Again, $\aca$ is required to show that $G_h$ exists as a set for every countable second-countable space $(X, \MP U, k)$ and function $h \colon \Nb \imp \Pf(I)$.  Thus the expression `$x \in G_h$' must be interpreted as an abbreviation for the formula `$(\exists n)(\exists i \in h(n))(x \in U_i)$.'

Let $(X, \MP U, k)$ be a countable second-countable space.  Every open subset of $X$ is an enumerable subset of $X$, meaning that for every $h \colon \Nb \imp \Pf(I)$, there is an $\hat{h} \colon \Nb \imp \Pf(X)$ such that $X_{\hat{h}} = G_h$:
\begin{align*}
\forall x [(\exists n)(x \in \hat{h}(n)) \biimp (\exists n)(\exists i \in h(n))(x \in U_i)].
\end{align*}
Furthermore, we may interpret any double-sequence $h \colon \Nb \times \Nb \imp \Pf(I)$ as a sequence $(G_{h(m,\cdot)} : m \in \Nb)$ of open sets, where the $m$\textsuperscript{th} open set in the sequence is $G_{h(m, \cdot)} = \bigcup_{n \in \Nb}\bigcup_{i \in h(m,n)}U_i$.  Each such sequence may also be thought of as a uniformly enumerable sequence of subsets of $X$.  That is, there is an $\hat{h} \colon \Nb \times \Nb \imp \Pf(X)$ such that, for all $m \in \Nb$, $X_{\hat{h}(m,\cdot)} = G_{h(m,\cdot)}$.  However, there may be sequences $(X_{\ell(m,\cdot)} : m \in \Nb)$ of uniformly enumerable subsets of $X$ where each individual $X_{\ell(m,\cdot)}$ is open, but there is no uniform way to code each $X_{\ell(m,\cdot)}$ as a union of basic open sets.  That is, it could be that for every $m$ there is an $h \colon \Nb \imp \Pf(I)$ such that $X_{\ell(m,\cdot)} = G_h$, but there is no $h \colon \Nb \times \Nb \imp \Pf(I)$ such that for every $m$, $X_{\ell(m,\cdot)} = G_{h(m,\cdot)}$.  We call a sequence of open subsets of $X$ \emph{honest} if each set in the sequence is uniformly coded as a union of basic open sets.

\begin{Definition}[$\rca$]
Let $(X, \MP U, k)$ be a countable second-countable space, where $\MP U = (U_i)_{i \in I}$.  A sequence $(X_{\ell(m,\cdot)} : m \in \Nb)$ of uniformly enumerable subsets of $X$ is an \emph{honest} sequence of open subsets of $X$ if there is a function $h \colon \Nb \times \Nb \imp \Pf(I)$ such that for all $m$, $X_{\ell(m,\cdot)} = G_{h(m, \cdot)}$.
\end{Definition}

Dorais gives two notions of compactness for countable second-countable spaces, corresponding to whether or not we require open covers to be honest.  We warn the reader that Dorais's original definition of compactness~\cite[Definition~3.1]{Dorais} considers only honest open covers, so the definition of `compact' in~\cites{Dorais, NoetherianSpaces} corresponds to the definition of `compact with respect to honest open covers' here.

\begin{Definition}[$\rca$]\label{def-comp}
Let $(X, \MP U, k)$ be a countable second-countable space.
\begin{itemize}
\item A sequence $(X_{h(m,\cdot)} : m \in \Nb)$ of uniformly enumerable subsets of $X$ is an \emph{open cover} of $X$ if $X_{h(m,\cdot)}$ is an open subset of $X$ for each $m$ and if $X = \bigcup_{m \in \Nb}X_{h(m,\cdot)}$ (i.e., $(\forall x \in X)(\exists m)(x \in X_{h(m,\cdot)})$).

\smallskip

\item An open cover $(X_{h(m,\cdot)} : m \in \Nb)$ of $X$ is \emph{honest} if $(X_{h(m,\cdot)} : m \in \Nb)$ is an honest sequence of open subsets of $X$.

\smallskip

\item The space $(X, \MP U, k)$ is \emph{compact} if for every open cover $(X_{h(m,\cdot)} : m \in \Nb)$ of $X$ there is an $M \in \Nb$ such that $X = \bigcup_{m < M}X_{h(m,\cdot)}$.

\smallskip

\item The space $(X, \MP U, k)$ is \emph{compact with respect to honest open covers} if for every honest open cover $(X_{h(m,\cdot)} : m \in \Nb)$ of $X$ there is an $M \in \Nb$ such that $X = \bigcup_{m < M}X_{h(m,\cdot)}$.
\end{itemize}
\end{Definition}

In terms of basic open sets, an honest open cover of $(X, \MP U, k)$ is an open cover of the form $X = \bigcup_{m \in \Nb}\bigcup_{n \in \Nb}\bigcup_{i \in h(m,n)}U_i$ for a function $h \colon \Nb \times \Nb \imp \Pf(I)$.  Thus $(X, \MP U, k)$ is compact w.r.t.\ honest open covers if and only if whenever $h \colon \Nb \times \Nb \imp \Pf(I)$ is such that $X = \bigcup_{m \in \Nb}\bigcup_{n \in \Nb}\bigcup_{i \in h(m,n)}U_i$, there is an $M \in \Nb$ such that $X = \bigcup_{m < M}\bigcup_{n \in \Nb}\bigcup_{i \in h(m,n)}U_i$.  A double-sequence of basic open sets may be rewritten as a single sequence of basic open sets, which means that a countable second-countable space is compact w.r.t.\ honest open covers if and only if it is compact w.r.t.\ honest open covers by basic open sets.  

\begin{Proposition}[$\rca$]
Let $(X, \MP U, k)$ be a countable second-countable space with $\MP U = (U_i)_{i \in I}$.  Then $(X, \MP U, k)$ is compact w.r.t.\ honest open covers if and only if for every $g \colon \Nb \imp I$ such that $X = \bigcup_{m \in \Nb}U_{g(m)}$, there is an $M \in \Nb$ such that $X = \bigcup_{m < M}U_{g(m)}$.  
\end{Proposition}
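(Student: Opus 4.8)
The statement asserts an equivalence between two formulations of compactness with respect to honest open covers: one where the covering open sets are arbitrary honest open sets, and one where the covering sets are basic open sets $U_{g(m)}$ directly. The paper already observes in the preceding paragraph that an honest open cover is essentially a cover of the form $X = \bigcup_m \bigcup_n \bigcup_{i \in h(m,n)} U_i$, and that a double-sequence of basic open sets collapses to a single sequence. So the real content is to show that the two quantifier structures — covers by honest open sets versus covers by single basic open sets indexed by a function $g \colon \Nb \to I$ — yield the same compactness notion, with both subsumption directions carried out in $\rca$.

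The plan is to prove the two directions separately. For the easy direction, suppose $(X, \MP U, k)$ is compact w.r.t.\ honest open covers and let $g \colon \Nb \imp I$ satisfy $X = \bigcup_{m \in \Nb} U_{g(m)}$. I would manufacture an honest open cover from $g$ by setting $h(m,n) = \{g(m)\}$ for all $n$, so that $G_{h(m,\cdot)} = U_{g(m)}$; this $h \colon \Nb \times \Nb \imp \Pf(I)$ exists in $\rca$ since it is defined by a simple recursive rule. The resulting sequence is an honest open cover, so compactness yields an $M$ with $X = \bigcup_{m < M} G_{h(m,\cdot)} = \bigcup_{m < M} U_{g(m)}$, as required.

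For the converse — the direction that carries the real weight — suppose the basic-open-set condition holds and let $(X_{h(m,\cdot)} : m \in \Nb)$ be an arbitrary honest open cover, witnessed by $h \colon \Nb \times \Nb \imp \Pf(I)$ with $X = \bigcup_m \bigcup_n \bigcup_{i \in h(m,n)} U_i$. The idea is to flatten the double-sequence of finite index-sets into a single function $g \colon \Nb \imp I$ enumerating all indices appearing in any $h(m,n)$, using a fixed $\rca$-definable pairing to list the pairs $(m,n)$ and then the finitely many elements of each $h(m,n)$. Since each $h(m,n) \in \Pf(I)$ is a coded finite set, $\rca$ can define $g$ so that $\ran(g)$ is exactly $\{i : (\exists m)(\exists n)(i \in h(m,n))\}$ and so that $X = \bigcup_{m \in \Nb} U_{g(m)}$. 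Applying the hypothesis to $g$ gives an $M'$ with $X = \bigcup_{m < M'} U_{g(m)}$. The final step is to pull this finite subcover of basic open sets back to a finite subcover of the original honest sets: each $g(m)$ for $m < M'$ arises from some pair $(m_0, n_0)$ with $i = g(m) \in h(m_0, n_0)$, and since there are only finitely many such $m$, the set of first coordinates $m_0$ is bounded by some $M$, giving $X = \bigcup_{m < M} G_{h(m,\cdot)}$.

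The step I expect to be the main obstacle is the bounding in the converse: extracting the bound $M$ on the first coordinates $m_0$ from the finite subcover $\{g(0), \dots, g(M'-1)\}$. Because the map from indices $g(m)$ back to their originating pairs $(m_0, n_0)$ involves an existential search, naively one needs to verify that a finite set of pairs is genuinely finite and bounded, and this is where one must be careful to stay within $\rca$ rather than invoking comprehension. The clean way to handle this is to arrange the flattening $g$ so that the originating first coordinate $m_0$ is recoverable as a primitive recursive function of the index $m$ — for instance by defining $g$ via a pairing that records $(m_0, n_0)$ explicitly — so that the bound $M = 1 + \max\{m_0(m) : m < M'\}$ is computed by a finite maximum over a bounded range, which $\rca$ handles directly via bounded $\Sigma^0_0$ reasoning. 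With the flattening built to carry this provenance data, the bounding is routine and no appeal to $\bso$ or stronger principles is needed.
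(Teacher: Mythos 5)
Your skeleton is the same as the paper's: the forward direction by observing that a cover by basic open sets is already honest (your $h(m,n)=\{g(m)\}$ witness is exactly the needed check), and the converse by flattening $h \colon \Nb \times \Nb \imp \Pf(I)$ into a single $g \colon \Nb \imp I$ via a coding from which the originating pair $(m,n)$ is recoverable, then pulling the finite subcover back. One remark on your final paragraph: the care you take to avoid $\bso$ is unnecessary, since $\bso$ is provable in $\rca$ (the paper lists it among the schemes available in the base theory, and its own proof simply invokes $\bso$ at the pullback step); your provenance-decoding maximum is a fine alternative, but it buys nothing.

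There is, however, a genuine gap in your definition of $g$. You assert that $\rca$ can define a total $g \colon \Nb \imp I$ whose range is exactly $\{i : (\exists m)(\exists n)(i \in h(m,n))\}$ by listing the pairs $(m,n)$ and then the finitely many elements of each $h(m,n)$. As stated this is not $\rca$-definable: the sets $h(m,n)$ may be empty (possibly all but finitely many of them), so the concatenated listing may be finite, and whether the listing reaches length $k$ is a $\Sigma^0_1$ question that cannot be decided when computing $g(k)$. The slot-based coding you sketch at the end (an index codes a triple $\la m, n, s \ra$ and $g$ returns the $(s+1)$\textsuperscript{th} element of $h(m,n)$) fixes totality but forces you to assign a value to triples with $|h(m,n)| < s+1$, and that default cannot be arbitrary: if it is some $i_\ast \in I$ not occurring in any $h(m,n)$, then $X = \bigcup_{p \in \Nb} U_{g(p)}$ still holds, but the pullback breaks, because a padded position $p < M'$ contributes $U_{i_\ast}$ to the finite subcover and $U_{i_\ast}$ need not be contained in any $G_{h(m,\cdot)}$; so you cannot convert $X = \bigcup_{p < M'} U_{g(p)}$ into $X = \bigcup_{m < M} G_{h(m,\cdot)}$. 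The paper's fix is the missing ingredient: since the space is non-empty and $h$ codes a cover, one can fix $(m_0, n_0)$ with $h(m_0, n_0) \neq \emptyset$ and an $i_0 \in h(m_0, n_0)$, and pad with $i_0$; then every padded position pulls back to $G_{h(m_0,\cdot)}$, and including $m_0$ in your maximum (or applying $\bso$, as the paper does) completes the argument.
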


\begin{proof}
Every open cover of the form $X = \bigcup_{m \in \Nb}U_{g(m)}$ is an honest open cover.  So if $(X, \MP U, k)$ is compact w.r.t.\ honest open covers, then for every $g \colon \Nb \imp I$ such that $X = \bigcup_{m \in \Nb}U_{g(m)}$, there is an $M \in \Nb$ such that $X = \bigcup_{m < M}U_{g(m)}$.

For the converse, suppose that $X = \bigcup_{m \in \Nb}\bigcup_{n \in \Nb}\bigcup_{i \in h(m,n)}U_i$ for some function $h \colon \Nb \times \Nb \imp \Pf(I)$.  Fix $m_0$ and $n_0$ such that $h(m_0, n_0) \neq \emptyset$, and fix an $i_0 \in h(m_0, n_0)$.  For the purposes of this argument, let $\la \cdot, \cdot, \cdot \ra \colon \Nb^3 \imp \Nb$ denote a bijection.  Define $g \colon \Nb \imp I$ by
\begin{align*}
g(\la m, n, s \ra) =
\begin{cases}
\text{the $(s+1)$\textsuperscript{th} smallest member of $h(m,n)$} & \text{if $|h(m,n)| \geq s+1$}\\
i_0 & \text{otherwise}. 
\end{cases}
\end{align*}
Then $X = \bigcup_{p \in \Nb}U_{g(p)}$, so there is a $P \in \Nb$ such that $X = \bigcup_{p < P}U_{g(p)}$.  For every $p < P$ there are $m$ and $n$ such that $g(p) \in h(m,n)$.  By $\bso$, there is an $M \in \Nb$ such that for every $p < P$, there is an $m < M$ and an $n$ such that $g(p) \in h(m,n)$.  Therefore $X = \bigcup_{m < M}\bigcup_{n \in \Nb}\bigcup_{i \in h(m,n)}U_i$.
\end{proof}

In light of the above proposition, we typically think of compactness w.r.t.\ honest open covers in terms of covers of the form $X = \bigcup_{m \in \Nb}U_{g(m)}$ for functions $g \colon \Nb \imp I$.  Equivalently, we may also think of compactness w.r.t.\ honest open covers in terms of covers of the form $X = \bigcup_{n \in \Nb}\bigcup_{i \in h(n)}U_i$ for functions $h \colon \Nb \imp \Pf(I)$, as was (implicitly) done in Dorais's original definition~\cite[Definition~3.1]{Dorais} and in~\cite{NoetherianSpaces}.

When working with compactness in $\rca$, there is the added wrinkle that it may or may not be possible to uniformly determine whether or not a given finite collection of basic open sets covers the whole space.   If it is possible, then we say that the space's base has a \emph{finite cover relation}.

\begin{Definition}[$\rca$; {\cite[Definition~2.13]{Dorais}}]
Let $(X, \MP U, k)$ be a countable second-countable space with $\MP U = (U_i)_{i \in I}$.  $\MP U$ has a \emph{finite cover relation} if there is a set $C \subseteq \Pf(I)$ such that, for all $\{i_0, \dots, i_{n-1}\} \subseteq I$, $\{i_0, \dots, i_{n-1}\} \in C$ if and only if $X = \bigcup_{j < n} U_{i_j}$.
\end{Definition}

Every honest open cover of a countable second-countable space is also an open cover of the space, so $\rca$ proves that a compact countable second-countable space is also compact w.r.t.\ honest open covers.  Unsurprisingly, $\aca$ is required to prove that every countable second-countable space that is compact w.r.t.\ honest open covers is compact.  This fact follows from~\cite[Example~5.4]{Dorais}, but we find it instructive to present a similar yet somewhat more straightforward proof.  It is convenient to first introduce notions of discreteness.

\begin{Definition}[$\rca$; {\cite[Definition~5.1]{Dorais}}]
Let $(X, \MP U, k)$ be a countable second-countable space with $\MP U = (U_i)_{i \in I}$.  
\begin{itemize}
\item $(X, \MP U, k)$ is \emph{discrete} if for every $x \in X$ there is an $i \in I$ such that $U_i = \{x\}$.

\smallskip

\item $(X, \MP U, k)$ is \emph{effectively discrete} if there is a function $d \colon X \imp I$ such that, for every $x \in X$, $U_{d(x)} = \{x\}$.
\end{itemize}
\end{Definition}

We readily see that an infinite discrete countable second-countable space is not compact and that an infinite effectively discrete countable second-countable space is not compact w.r.t.\ honest open covers.

\begin{Proposition}\label{prop-CompImpHcomp}
The statement ``every countable second-countable space that is compact w.r.t.\ honest open covers is compact'' is equivalent to $\aca$ over $\rca$.
\end{Proposition}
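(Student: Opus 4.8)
The plan is to prove the equivalence in two directions. For the forward direction (that the statement implies $\aca$ over $\rca$), I would use Lemma~\ref{lem-ACAinjection} and show that the statement implies every injection has a range. Given an injection $f \colon \Nb \imp \Nb$, I would build a countable second-countable space whose compactness w.r.t.\ honest open covers is provable in $\rca$, but whose full compactness encodes the range of $f$. A natural candidate is a one-point compactification of a discrete space: take points $\{x_n : n \in \Nb\} \cup \{\infty\}$, with basic open sets $\{x_n\}$ for each $n$ and cofinite neighborhoods of $\infty$. The key is to design a (non-honest) open cover that separates out the range of $f$, so that extracting a finite subcover lets us decide membership in $\ran(f)$.

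The cleaner approach, following the hint that the construction should resemble \cite[Example~5.4]{Dorais}, is to exploit effective discreteness. Here I would build a space that is discrete but not effectively discrete, using the injection $f$ to control which singletons are effectively locatable. Concretely, I would let the underlying point set be $\Nb$ and arrange the base so that the space is discrete (every point is isolated) but the witness function requires knowing $\ran(f)$. One then produces an open cover by singletons that is \emph{not} honest (because honesty would require a uniform presentation that computes $\ran(f)$), yet is a genuine open cover. Compactness then forces a finite subcover of a space we can ensure is infinite-looking unless we know the range, yielding a contradiction that pins down $\ran(f)$. The precise gadget needs the space to be compact w.r.t.\ honest open covers (provably in $\rca$) while the failure of full compactness, or the extraction of the finite subcover, decides $\ran(f)$.

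For the reverse direction (that $\aca$ proves the statement), the argument is comparatively routine. Working in $\aca$, suppose $(X, \MP U, k)$ is compact w.r.t.\ honest open covers, and let $(X_{h(m,\cdot)} : m \in \Nb)$ be an arbitrary open cover, not necessarily honest. Under $\aca$, each open set $X_{h(m,\cdot)}$ can be converted into an honest presentation: for each $m$ and each point $x \in X_{h(m,\cdot)}$, arithmetical comprehension lets me search for and collect an index $i$ with $x \in U_i \subseteq X_{h(m,\cdot)}$, producing a uniform function $h' \colon \Nb \times \Nb \imp \Pf(I)$ with $X_{h'(m,\cdot)} = X_{h(m,\cdot)}$ for all $m$. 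This makes the cover honest, so compactness w.r.t.\ honest open covers applies and yields a finite subcover. The main work is verifying that the recoding is arithmetically definable and that $\aca$ suffices to form the required set $h'$, which it does since $\aca$ proves $G_h$ exists and supports the necessary unbounded searches.

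I expect the main obstacle to be the forward direction: specifically, engineering a space that is provably compact w.r.t.\ honest open covers in $\rca$ (so that no hidden comprehension sneaks in on the honest side) while simultaneously arranging that a carefully chosen non-honest open cover fails to have a finite subcover precisely when a number is omitted from $\ran(f)$. The delicate point is ensuring the honest-compactness holds in the weak base theory, which likely requires the space to be finite-branching or one-point-compactification-like so that $\bso$ handles honest covers, whereas the non-honest cover genuinely encodes the $\Sigma^0_1$ but non-$\Delta^0_1$ information of $\ran(f)$.
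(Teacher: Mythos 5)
Your direction ``$\aca$ proves the statement'' is essentially right and matches the paper: under $\aca$ every sequence of open sets can be uniformly re-coded as unions of basic open sets, so the two compactness notions coincide. The reversal, however, is not a fixable sketch but the wrong shape of argument, on two counts. First, the gadget you ask for --- a space that is \emph{provably in $\rca$} compact w.r.t.\ honest open covers, while being discrete (so that the singleton cover witnesses failure of full compactness) or while some designed non-honest cover has no finite subcover --- cannot exist. Both of those properties, being $\rca$-provable, would hold in the $\omega$-model consisting of all arithmetical sets; but that model satisfies $\aca$ and hence the statement, so it contains no space that is compact w.r.t.\ honest covers yet not compact. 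Second, your extraction step is information-theoretically hopeless: any cover you construct in $\rca$ from $f$ is $\Delta^0_1$-definable from $f$, and a finite subcover contributes only a single number $M$, so your argument would show that $\ran(f)$ is $\Delta^0_1$-definable from $f$ together with the parameter $M$, i.e.\ $\ran(f) \leqT f$. In the arithmetical $\omega$-model take $f$ to be a computable injection whose range computes $0'$: the model satisfies $\rca$ plus the statement, yet $\ran(f) \not\leqT f$. A finite subcover simply cannot carry the range.

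The paper uses the hypothesis in the opposite direction, and computes the range from an \emph{infinite} witness rather than a finite one. From $f$ one builds a space on $\Nb$ with basic open sets $U_{\la 0,n \ra} = \{n\} \cup \{t : (\exists s \leq t)(f(s)=n)\}$, $U_{\la 1, \la n,s \ra \ra} = \{n\}$ if $f(s)=n$ (else $\emptyset$), and the tails $U_{\la 2,s \ra} = \{t : t \geq s\}$. This space is provably infinite and discrete, hence provably \emph{not} compact. The statement, taken contrapositively, then yields that the space is not compact w.r.t.\ honest open covers either: there \emph{exists} an honest cover $h$ by basic open sets with no finite subcover. That $h$ --- an infinite object granted by the hypothesis, not constructed by you --- is the oracle from which the range is computed: every basic open set is finite or cofinite; $h$ can list no cofinite set, since otherwise $\bso$ would produce a finite subcover; and the only finite basic set containing $n$ is $U_{\la 0,n \ra}$ when $n \notin \ran(f)$, respectively the sets $U_{\la 1, \la n,s \ra \ra}$ when $f(s)=n$. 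Hence $\exists s(f(s)=n)$ holds iff $h$ ever lists some $\la 1, \la n,s \ra \ra$, iff $h$ never lists $\la 0,n \ra$, and $\Delta^0_1$ comprehension gives $\ran(f)$. The two ideas missing from your proposal are exactly these: apply the statement in contrapositive to a provably non-compact (discrete) space whose honest compactness is \emph{not} decided by $\rca$, and read the range off the resulting failed-cover witness.
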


\begin{proof}
For the forward direction, when working in $\aca$ it is routine to show that every sequence of open subsets of a countable second-countable space is honest.  Thus $\aca$ proves that compactness and compactness w.r.t.\ honest open covers are equivalent.

For the reverse direction, let $f \colon \Nb \imp \Nb$ be an injection.  We appeal to Lemma~\ref{lem-ACAinjection} and show that the range of $f$ exists.  Define a countable second-countable space $(X, \MP U, k)$ by $X = \Nb$, $I = \{0,1,2\} \times \Nb$, and
\begin{align*}
U_{\la 0, n \ra} &= \{n\} \cup \{t : (\exists s \leq t)(f(s) = n)\}\\
U_{\la 1, \la n, s \ra \ra} &=
\begin{cases}
\{n\} & \text{if $f(s) = n$}\\
\emptyset & \text{if $f(s) \neq n$}
\end{cases}\\
U_{\la 2, s \ra} &= \{t : t \geq s\}.
\end{align*}

The function $k$ is computed as follows.  To check that $k$ behaves as intended, it is helpful to observe that if $x \in U_{\la 0, n \ra}$ but $x \neq n$, then $U_{\la 2, x \ra} \subseteq U_{\la 0, n \ra}$.
\begin{itemize}
\item For $k(x, \la 0, m \ra, \la 0, n \ra)$:
\begin{itemize}
\item If $m = n$, then output $\la 0, m \ra$.
\item If $m \neq n$ and either $x = m$ or $x = n$, then check if there is an $s \leq x$ such that $f(s) = x$.  If so, output $\la 2, x \ra$.  If not, output $\la 0, x \ra$.
\item If $m \neq n$, $x \neq m$, and $x \neq n$, then output $\la 2, x \ra$.
\end{itemize}

\smallskip

\item For $k(x, \la 0, m \ra, \la 1, \la n, s \ra \ra)$ and $k(x, \la 1, \la n, s \ra \ra, \la 0, m \ra)$:  Output $\la 1, \la n, s \ra \ra$.

\smallskip

\item For $k(x, \la 0, m \ra, \la 2, s \ra)$ and $k(x, \la 2, s \ra, \la 0, m \ra)$:
\begin{itemize}
\item If $x = m$, check if there is a $t \leq x$ such that $f(t) = x$.  If so, output $\la 2, x \ra$.  If not, output $\la 0, x \ra$.
\item If $x \neq m$, output $\la 2, x \ra$.
\end{itemize}

\smallskip

\item For $k(x, \la 1, \la m, s \ra \ra, \la 1, \la n, t \ra \ra)$, output $\la 1, \la m, s \ra \ra$.

\smallskip

\item For $k(x, \la 1, \la m, s \ra \ra, \la 2, t \ra)$ and $k(x, \la 2, t \ra, \la 1, \la m, s \ra \ra)$, output $\la 1, \la m, s \ra \ra$.

\smallskip

\item For $k(x, \la 2, s \ra, \la 2, t \ra)$, output $\la 2, \max\{s,t\} \ra$.

\end{itemize}

The space $(X, \MP U, k)$ is discrete because for every $n$, either $U_{\la 0, n \ra} = \{n\}$ or there is an $s$ such that $U_{\la 1, \la n, s \ra \ra} = \{n\}$.  Thus $(X, \MP U, k)$ is not compact, and therefore it is not compact w.r.t.\ honest open covers.  Let $h \colon \Nb \imp I$ be such that $X = \bigcup_{m \in \Nb}U_{h(m)}$, but such that there is no $M \in \Nb$ for which $X = \bigcup_{m < M}U_{h(m)}$.  Notice that every basic open set is either finite or cofinite.  If $U_{h(m)}$ is cofinite for some $m$, then by using $\bso$ and the assumption $X = \bigcup_{m \in \Nb}U_{h(m)}$, we may conclude that there is an $M \in \Nb$ such that $X = \bigcup_{m < M}U_{h(m)}$, which is a contradiction.  Thus $U_{h(m)}$ is finite for every $m$.  Consider an $n \in X$.  There must be an $m$ such that $n \in U_{h(m)}$.  If there is an $s$ such that $f(s) = n$, then the only finite basic open set that contains $n$ is $U_{\la 1, \la n, s \ra \ra}$, so in this case it must be that $h(m) = \la 1, \la n, s \ra \ra$.  If instead there is no $s$ such that $f(s) = n$, then the only finite basic open set that contains $n$ is $U_{\la 0, n \ra}$, so in this case it must be that $h(m) = \la 0, n \ra$.  Therefore
\begin{align*}
\exists s(f(s) = n) \Biimp \exists m \exists s(h(m) = \la 1, \la n, s \ra \ra) \Biimp \forall m(h(m) \neq \la 0, n \ra).
\end{align*}
Thus the range of $f$ exists by $\Delta^0_1$-comprehension.
\end{proof}

Dorais's~\cite[Example~5.4]{Dorais} shows that, over $\rca$, $\aca$ is equivalent to the statement ``every infinite countable second-countable space that is compact w.r.t.\ honest open covers and whose base has a finite cover relation is not discrete.''  The proof of Proposition~\ref{prop-CompImpHcomp} may also be seen as a proof of this fact, as one may check that the constructed $\MP U$ has a finite cover relation.

This work concerns the order topologies of countable linear orders, which give natural examples of (strong) countable second-countable spaces.

\begin{Definition}[$\rca$; {\cite[Definition~7.1]{Dorais}}]~\label{def-OrderedSpace}
Let $(L, \prec)$ be a linear order.  The base for the \emph{order topology} on $L$ is given by $\MP U = (U_i)_{i \in I}$ and $k : L \times I \times I \imp I$, where
\begin{itemize}
\item $I = (L \cup\{-\infty, \infty\}) \times (L \cup\{-\infty, \infty\})$,

\smallskip

\item $U_{\la a, b \ra} = (a, b) = \{x \in L : a \prec x \prec b\}$, for $\la a,b \ra \in I$, and

\smallskip

\item $k(x, \la a_0, b_0 \ra, \la a_1, b_1 \ra) = \la \max(a_0, a_1), \min(b_0, b_1) \ra$ for $x \in L$ and $\la a_0, b_0 \ra, \la a_1, b_1 \ra \in I$.
\end{itemize}
The \emph{ordered space} associated with $L$ is the countable second-countable space $(L, \MP U, k)$.
\end{Definition}
In the above definition, $-\infty$ and $\infty$ are (codes for) two distinct fresh symbols not in $L$.  We extend $\prec$ to $L \cup\{-\infty, \infty\}$ by setting $-\infty \prec x \prec \infty$ for all $x \in L$.  Note that, as a matter of convenience, we allow $\la a, b \ra \in I$ even when $b \prec a$, in which case $U_{\la a, b \ra} = \emptyset$.  As the basic open subsets of $L$ are particularly easy to describe, we dispense with the notational encumbrances of Definition~\ref{def-open} and simply write an enumeration of basic open sets as $((a_n, b_n) : n \in \Nb)$, with the understanding that $a_n, b_n \in L \cup \{-\infty, \infty\}$ for each $n$.  Notice that the base of an ordered space always has a finite cover relation~\cite[Proposition~7.5]{Dorais}.

As mentioned above, Dorais observes that $\rca$ proves that if the order topology of $(L, \prec)$ is compact w.r.t.\ honest open covers, then $(L, \prec)$ is complete.

\begin{Lemma}[{\cite[Section~7]{Dorais}}]\label{lem-CompactImpComplete}
$\rca$ proves the statement ``for every countable linear order $(L, \prec)$, if the order topology of $(L, \prec)$ is compact w.r.t.\ honest open covers, then $(L, \prec)$ is complete.''  It follows that $\rca$ also proves the statement ``for every countable linear order $(L, \prec)$, if the order topology of $(L, \prec)$ is compact, then $(L, \prec)$ is complete.''
\end{Lemma}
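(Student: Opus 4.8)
The plan is to prove the contrapositive of the first statement: if $(L, \prec)$ is not complete, then its order topology is not compact with respect to honest open covers. The second statement then follows immediately, since (as already noted in the excerpt) $\rca$ proves that every honest open cover is an open cover, and hence that a compact ordered space is compact with respect to honest open covers; so if the order topology is compact it is complete as well.

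So suppose $(L, \prec)$ is not complete. Unwinding the definition of completeness, this means there is a partition $L = A^- \cup A^+$ satisfying $(\forall x \in A^-)(\forall y \in A^+)(x \prec y)$ in which $A^-$ has no greatest element and $A^+$ has no least element. Because $A^-$ and $A^+$ are given \emph{sets}, membership in them is $\Delta^0_1$, so $\rca$ can form the function $g \colon \Nb \imp I$ that sends each $c \in A^-$ to (the index of) the interval $(-\infty, c)$, each $c \in A^+$ to the interval $(c, \infty)$, and each $c \notin L$ to the empty interval. I would then verify that the sequence of basic open sets $(U_{g(n)} : n \in \Nb)$ is an honest open cover of $L$. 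Honesty is automatic: each term is a single basic open set, hence trivially a one-term union of basic open sets. That it covers $L$ uses both hypotheses on the cut: every $x \in A^-$ lies in $(-\infty, c)$ for some $c \in A^-$ with $x \prec c$ (such a $c$ exists because $A^-$ has no greatest element), and every $x \in A^+$ lies in $(c, \infty)$ for some $c \in A^+$ with $c \prec x$ (such a $c$ exists because $A^+$ has no least element).

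It then remains to show that this cover admits no finite subcover. Suppose toward a contradiction that finitely many of the listed intervals cover $L$, and let $S^- \subseteq A^-$ and $S^+ \subseteq A^+$ be the finite sets of left endpoints and right endpoints occurring among them. The point is that $\bigcup_{c \in S^-}(-\infty, c) \subseteq A^-$ and $\bigcup_{c \in S^+}(c, \infty) \subseteq A^+$, so a point sitting at the cut is left uncovered. Concretely, if $S^- \neq \emptyset$ then its maximum $a^*$ lies in $A^-$, in no $(-\infty, c)$ with $c \in S^-$ (since $c \preceq a^*$), and in no $(c, \infty)$ with $c \in S^+$ (since $a^* \prec c$); and if $S^- = \emptyset$ then $S^+ \neq \emptyset$ because $L$ is non-empty, and its minimum $b^*$ is uncovered for the symmetric reasons. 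Forming these finite maxima and minima and carrying out the two-case analysis is routine in $\rca$.

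I do not expect a genuine obstacle here. The only points requiring care are checking that $g$ is honestly $\Delta^0_1$ from the witnessing cut, so that the cover provably exists in $\rca$, and confirming that an uncovered witness can always be exhibited — which is exactly where the hypotheses ``$A^-$ has no greatest element'' and ``$A^+$ has no least element'' are spent, first to obtain a cover and then to block every finite subcover. Because the covering sets are single basic intervals, no appeal to $\bso$ or to the uniform-honesty machinery is needed beyond the trivial observation above.
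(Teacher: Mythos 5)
Your proof is correct and takes essentially the same approach as the paper: both argue the contrapositive, turning a witnessing cut $(A^-, A^+)$ into the honest open cover by the basic open sets $(-\infty, c)$ for $c \in A^-$ and $(c, \infty)$ for $c \in A^+$, which covers $L$ precisely because the cut has no endpoint and admits no finite subcover. The paper simply asserts these last two facts, whereas you spell out the $\Delta^0_1$ construction of the enumeration and the max/min argument blocking finite subcovers; these are exactly the routine details the paper omits.
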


\begin{proof}
We prove the contrapositive.  Suppose that $(L, \prec)$ is not complete, and let $A^- \cup A^+ = L$ be a partition where $(\forall x \in A^-)(\forall y \in A^+)(x \prec y)$, but is such that $A^-$ has no maximum element and $A^+$ has no minimum element.  Then any enumeration of the basic open sets of the form $(-\infty, b)$ for $b \in A^-$ and $(a, \infty)$ for $a \in A^+$ is an honest open cover of $L$ by basic open sets that has no finite subcover.  Thus the order topology of $(L, \prec)$ is not compact w.r.t.\ honest open covers.
\end{proof}

We show the following in the next section.
\begin{itemize}
\item The statement ``for every countable linear order $(L, \prec)$, if $(L, \prec)$ is complete, then the order topology of $(L, \prec)$ is compact w.r.t.\ honest open covers'' is equivalent to $\wkl$ over $\rca$.

\smallskip

\item The statement ``for every countable linear order $(L, \prec)$, if $(L, \prec)$ is complete, then the order topology of $(L, \prec)$ is compact'' is equivalent to $\aca$ over $\rca$.
\end{itemize}

\section{The strength of compactness for complete linear orders}

First, we show that $\wkl$ proves that the order topology of a complete linear order is compact w.r.t.\ honest open covers.  It follows that $\aca$ proves that the order topology of a complete linear order is compact.  The proof is essentially an implementation of the usual argument as found, for example, in the proof of~\cite[Theorem~27.1]{Munkres}.

\begin{Lemma}\label{lem-fwWKL}
$\wkl$ proves the statement ``for every countable linear order $(L, \prec)$, if $(L, \prec)$ is complete, then the order topology of $(L, \prec)$ is compact w.r.t.\ honest open covers.''
\end{Lemma}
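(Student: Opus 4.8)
The plan is to argue by contradiction, following the classical Heine--Borel proof but replacing dyadic bisection by bisection guided by an enumeration of $L$ and extracting the resulting nested interval via bounded K\"onig's lemma. By the proposition reducing compactness w.r.t.\ honest open covers to covers given by a single enumeration of basic open sets, it suffices to show: if $(L,\prec)$ is complete and $((a_n,b_n):n\in\Nb)$ enumerates basic open sets with $L=\bigcup_{n}(a_n,b_n)$, then $L=\bigcup_{n<N}(a_n,b_n)$ for some $N$. Suppose toward a contradiction that no finite subcover exists. The key decidability fact is that, for $c,d\in L\cup\{-\infty,\infty\}$ and finite $F\subseteq\Nb$, whether $[c,d]$ is covered by $\{(a_m,b_m):m\in F\}$ is decidable, since $[c,d]\subseteq\bigcup_{m\in F}(a_m,b_m)$ holds if and only if $L=(-\infty,c)\cup(d,\infty)\cup\bigcup_{m\in F}(a_m,b_m)$, and the latter is decided by the finite cover relation of the ordered space \cite[Proposition~7.5]{Dorais} (dropping the term $(-\infty,c)$ when $c=-\infty$, and likewise $(d,\infty)$ when $d=\infty$).

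Fix an enumeration of the elements of $L$ as $\ell_0,\ell_1,\dots$, and build a tree $T$ recording nested uncovered closed intervals. For each $n$, sorting $\{-\infty,\infty\}\cup\{\ell_i:i<n\}$ under $\prec$ splits $L$ into closed intervals $[c^n_0,c^n_1],\dots,[c^n_n,c^n_{n+1}]$ (with $c^n_0=-\infty$, $c^n_{n+1}=\infty$) whose union is $L$. A node of $T$ at level $n$ is a code for one of these intervals that is not covered by $\{(a_m,b_m):m<n\}$, with the child relation given by reverse inclusion. Passing from level $n$ to level $n+1$ either leaves the interval unchanged or splits it at $\ell_n$, so every node has at most two children and $T$ is a bounded subtree of $\Nb^{<\Nb}$; by the decidability fact above, membership in $T$ is $\Delta^0_1$, so $T$ exists. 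If $T$ had no node at some level $N$, then every level-$N$ interval would be covered by $\{(a_m,b_m):m<N\}$, hence so would their union $L$, contradicting our assumption; thus $T$ has a node at every level and is infinite. Applying bounded K\"onig's lemma, which is equivalent to $\wkl$ by Theorem~\ref{thm-KLequivs}, yields an infinite path, i.e.\ nested closed intervals $[p_0,q_0]\supseteq[p_1,q_1]\supseteq\cdots$ with $p_n$ nondecreasing, $q_n$ nonincreasing, and each $[p_n,q_n]$ not covered by $\{(a_m,b_m):m<n\}$.

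It remains to extract a contradiction from this path, and this is where completeness enters and where the real work lies. Because at stage $i$ the construction removes $\ell_i$ from the interior of the current interval (either by splitting at $\ell_i$ or because $\ell_i$ already lies outside it), no element of $L$ lies strictly between $\sup_n p_n$ and $\inf_n q_n$. Setting $A^+=\{x\in L:\exists n\,(q_n\preceq x)\}$ and $A^-=L\setminus A^+$ gives a cut of $L$, since $x\in A^-$ and $y\in A^+$ imply $x\prec q_n\preceq y$ for a suitable $n$. By completeness, either $A^-$ has a greatest element or $A^+$ has a least element; say $A^+$ has a least element $x^\ast$ (the other case is symmetric). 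Then $x^\ast\in L$ lies in some cover set $(a,b)=(a_m,b_m)$, and a short argument shows $q_n=x^\ast$ for all large $n$. If $a\prec p_n$ for some such $n$, then $[p_n,q_n]\subseteq(a,b)$, so for $n'>\max(m,n)$ the interval $[p_{n'},q_{n'}]\subseteq(a_m,b_m)$ is covered by $\{(a_{m'},b_{m'}):m'<n'\}$, contradicting uncoveredness. Otherwise $p_n$ climbs to $a$ and stabilizes there, so $[p_n,q_n]$ collapses to the two-point set $\{a,x^\ast\}\subseteq L$ (using that no element of $L$ lies strictly between); but $a$ and $x^\ast$ each lie in a single cover set, so $[p_{n'},q_{n'}]$ is again covered by $\{(a_{m'},b_{m'}):m'<n'\}$ once $n'$ exceeds both indices, once more a contradiction.

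I expect the main obstacle to be this last paragraph: pinning down the cut determined by an \emph{arbitrary} path (which need not refine toward any prescribed cut), verifying that the limiting interval contains no interior point of $L$, and carefully handling the degenerate case in which the nested intervals collapse onto their endpoints. The tree construction and the appeal to bounded K\"onig's lemma follow the standard Heine--Borel template and should be routine given the finite cover relation for ordered spaces.
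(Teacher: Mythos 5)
Your proposal is correct in its essentials, but it takes a genuinely different route from the paper's. The paper proves the contrapositive: from a cover with no finite subcover it builds a $\{0,1\}$-valued tree whose nodes approximate subsets of $L$ that contain the minimum, omit the maximum, are $\prec$-downward closed, and are closed under ``linkages'' (finite chains of successively overlapping cover intervals); weak K\"onig's lemma extracts a path, which is precisely a cut witnessing that $(L,\prec)$ is not complete. There, completeness is never used as a tool, and no decidability of covering is needed, because all quantification over linkages inside a node is bounded by the node's length, making the tree $\Delta^0_1$ outright; this is the paper's implementation of the classical ``supremum of finitely coverable points'' argument. Your proof instead runs the nested-interval Heine--Borel template directly: assuming completeness and a cover with no finite subcover, you build the bounded tree of uncovered closed intervals induced by an enumeration of $L$, apply bounded K\"onig's lemma (Theorem~\ref{thm-KLequivs}), and then use completeness to locate the cut determined by the nested intervals, getting a contradiction from the cover set containing the resulting boundary point. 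The price of your route is its reliance on the finite cover relation of ordered spaces (\cite[Proposition~7.5]{Dorais}) to make the tree $\Delta^0_1$; the benefit is that it follows the familiar bisection template and uses completeness positively rather than manufacturing incompleteness. Your concluding case analysis is sound, including the case you wave off as symmetric (it is: each $p_n$ with $p_n \in L$ lies in $A^-$, and since no point of $L$ is interior to all the intervals, a greatest element of $A^-$ forces $p_n$ to stabilize at it, after which the argument mirrors the other case), and the degenerate situations $a = -\infty$ or $b = \infty$ only collapse the limiting interval to a single point of $L$, which is even easier.

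One step does need shoring up for $\rca$: the set $A^+ = \{x \in L : \exists n\,(q_n \preceq x)\}$ is defined by a $\Sigma^0_1$ formula, and $\rca$ does not grant its existence as written. The fix is short and uses a fact you already isolated. Writing $x = \ell_i$, the construction guarantees $\ell_i \notin (p_{i+1}, q_{i+1})$; and if $\ell_i \preceq p_{i+1}$, then no $q_n$ can satisfy $q_n \preceq \ell_i$ (for $n \leq i+1$ because $q_n \succeq q_{i+1} \succ p_{i+1} \succeq \ell_i$, and for $n > i+1$ because $q_n \succ p_n \succeq p_{i+1} \succeq \ell_i$). Hence $\ell_i \in A^+$ if and only if $q_{i+1} \preceq \ell_i$, which is a $\Delta^0_1$ condition relative to the path and the enumeration, so $A^+$ exists by $\Delta^0_1$ comprehension. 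With that observation inserted, your argument goes through.
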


\begin{proof}
We prove the contrapositive of the statement in $\wkl$.  Suppose that the order topology of $(L, \prec)$ is not compact w.r.t.\ honest open covers, and let $((a_n, b_n) : n \in \Nb)$ be an open cover of $L$ by basic open sets with no finite subcover.  Assume that $0$ is the minimum element of $L$ and that $1$ is the maximum element of $L$, for if either $L$ has no minimum or $L$ has no maximum, then $(L, \prec)$ is not complete, as desired.

Define a \emph{linkage} in $L$ to be a finite sequence $((a_{n_i}, b_{n_i}) : i < k)$ of intervals from the cover such that $(\forall i < k-1)(a_{n_{i+1}} \prec b_{n_i} \prec b_{n_{i+1}})$.  Say that an $\ell \in L$ is in a linkage $((a_{n_i}, b_{n_i}) : i < k)$ if $(\exists i < k)(a_{n_i} \prec \ell \prec b_{n_i})$ (i.e., if $\ell \in \bigcup_{i < k}(a_{n_i}, b_{n_i})$).  Notice that no linkage contains both $0$ and $1$ because such a linkage would be a finite subcover of $((a_n, b_n) : n \in \Nb)$.  A straightforward application of $\Pi^0_1$ induction on the length of a linkage shows that the union of a linkage is an interval, meaning that if $((a_{n_i}, b_{n_i}) : i < k)$ is a linkage, if $x$ and $y$ are both in the linkage, and if $x \prec y$, then every $z \in (x, y)$ is also in the linkage.

We define the tree $T \subseteq 2^{<\Nb}$ consisting of all sequences $\sigma$ that look like initial segments of sets that contain $0$, do not contain $1$, are $\prec$-downward-closed, and are closed under linkages.  Let $T$ be the set of all $\sigma \in 2^{<\Nb}$ satisfying the following conditions.
\begin{itemize}
\item If $|\sigma| > 0$, then $\sigma(0) = 1$.

\smallskip

\item If $|\sigma| > 1$, then $\sigma(1) = 0$.

\smallskip

\item For all $x < |\sigma|$, if $x \notin L$, then $\sigma(x) = 0$.

\smallskip

\item For all $x, y < |\sigma|$, if $x, y \in L$, $x \prec y$, and $\sigma(y) = 1$, then $\sigma(x) = 1$.

\smallskip

\item For all $x, y < |\sigma|$ and for all finite sequences $\la n_i : i < k \ra < |\sigma|$, if $x, y \in L$, if $((a_{n_i}, b_{n_i}) : i < k)$ is a linkage containing both $x$ and $y$, and if $\sigma(y) = 1$, then $\sigma(x) = 1$.
\end{itemize}
$T$ is $\Delta^0_1$ relative to $(L, \prec)$ and hence exists by $\Delta^0_1$ comprehension.  It is easy to see that $T$ is closed under initial segments and hence is a tree.  We show that $T$ is infinite.  To this end, let $n \in \Nb$, and let
\begin{align*}
D = \{x < n : (\exists \la n_i : i < k \ra)[\text{$((a_{n_i}, b_{n_i}) : i < k)$ is a linkage containing both $0$ and $x$}]\}.
\end{align*}
The set $D$ exists by bounded $\Sigma^0_1$ comprehension.  Let $\sigma \in 2^{<\Nb}$ be the sequence of length $n$ where, for all $x < n$, $\sigma(x) = 1$ if $x \in D$ and $\sigma(x) = 0$ if $x \notin D$.  Then $\sigma \in T$, which can be seen by noticing that $0 \in D$ (if $n > 0$); that $1 \notin D$ because $0$ and $1$ are not in a linkage together; and that $D$ is $\prec$-downwards closed in $L \cap \{0, 1, \dots, n-1\}$ because the union of any linkage containing $0$ is an initial segment of $L$.  Therefore, for every $n$ there is a $\sigma \in T$ with $|\sigma| = n$.  Hence $T$ is infinite.

Thus $T$ is an infinite subtree of $2^{<\Nb}$.  Apply weak K\"onig's lemma to $T$ to get an infinite path, and view that path as the characteristic function of a set $X \subseteq L$.  $X$ contains $0$, does not contain $1$, is $\prec$-downward-closed, and is closed under linkages.  By setting $A^- = X$ and $A^+ = L \setminus X$, we obtain a partition $L = A^- \cup A^+$ where $0 \in A^-$, $1 \in A^+$, and $(\forall x \in A^-)(\forall y \in A^+)(x \prec y)$.

We show that $A^-$ has no maximum element and that $A^+$ has no minimum element.  First, suppose for a contradiction that $A^-$ has a maximum element $\ell$.  As $((a_n, b_n) : n \in \Nb)$ is a cover, let $(a_{n_0}, b_{n_0})$ be such that $\ell \in (a_{n_0}, b_{n_0})$.  We cannot have that $b_{n_0} = \infty$, for otherwise $\ell$ and $1$ are both in the linkage $(a_{n_0}, b_{n_0})$, which implies that $1 \in A^-$ because $A^-$ is linkage-closed.  Thus $b_{n_0} \in L$, so let $(a_{n_1}, b_{n_1})$ be such that $b_{n_0} \in (a_{n_1}, b_{n_1})$.  Then $(a_{n_0}, b_{n_0}), (a_{n_1}, b_{n_1})$ is a linkage containing both $\ell$ and $b_{n_0}$.  Thus $b_{n_0} \succ \ell$ is in $A^-$ because $A^-$ is linkage-closed.  This contradicts that $\ell$ is the maximum of $A^-$.  Now suppose for a contradiction that $A^+$ has a minimum element $\ell$.  Again by the fact that $((a_n, b_n) : n \in \Nb)$ is a cover, let $(a_{m_1}, b_{m_1})$ be such that $\ell \in (a_{m_1}, b_{m_1})$.  We cannot have that $a_{m_1} = -\infty$, for otherwise $0$ and $\ell$ are both in the linkage $(a_{m_1}, b_{m_1})$, which implies that $\ell \in A^-$ because $0 \in A^-$ and $A^-$ is linkage-closed.  Thus $a_{m_1} \in L$, so let $(a_{m_0}, b_{m_0})$ be such that $a_{m_1} \in (a_{m_0}, b_{m_0})$.  Then either $b_{m_0} \prec b_{m_1}$, in which case $(a_{m_0}, b_{m_0}), (a_{m_1}, b_{m_1})$ is a linkage containing both $\ell$ and $a_{m_1}$; or $b_{m_0} \succeq b_{m_1}$, in which case $(a_{m_0}, b_{m_0}) \supseteq (a_{m_1}, b_{m_1})$ is a linkage containing both $\ell$ and $a_{m_1}$.  Thus $a_{m_1}$ is in a linkage with $\ell$.  However, $a_{m_1} \in A^-$ because $a_{m_1} \prec \ell$ and $\ell$ is the minimum element of $A^+$.  This contradicts that $A^-$ is linkage-closed.  Thus $A^-$ has no maximum element, and $A^+$ has no minimum element.  So $A^-$ and $A^+$ witness that $(L, \prec)$ is not complete.
\end{proof}

\begin{Corollary}\label{cor-fwACA}
$\aca$ proves the statement ``for every countable linear order $(L, \prec)$, if $(L, \prec)$ is complete, then the order topology of $(L, \prec)$ is compact.''
\end{Corollary}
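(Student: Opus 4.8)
The plan is to simply chain together the two results immediately preceding this corollary. As noted in Section~2, $\aca$ is strictly stronger than $\wkl$, so everything provable in $\wkl$ is also provable in $\aca$. Hence Lemma~\ref{lem-fwWKL} is available when working in $\aca$, and it tells us that $\aca$ proves ``for every countable linear order $(L, \prec)$, if $(L, \prec)$ is complete, then the order topology of $(L, \prec)$ is compact w.r.t.\ honest open covers.''

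The second step is to upgrade the conclusion from compactness w.r.t.\ honest open covers to full compactness. For this I would invoke the forward direction of Proposition~\ref{prop-CompImpHcomp}, which asserts that $\aca$ proves ``every countable second-countable space that is compact w.r.t.\ honest open covers is compact.'' As recorded in that proof, the underlying reason is that in $\aca$ every sequence of open subsets of a countable second-countable space is honest, so the two notions of compactness coincide. Applying this to the ordered space $(L, \MP U, k)$ associated with $L$ (Definition~\ref{def-OrderedSpace}) then gives that its order topology is compact.

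Combining the two steps yields the corollary: reasoning in $\aca$, if $(L, \prec)$ is complete, then by Lemma~\ref{lem-fwWKL} its order topology is compact w.r.t.\ honest open covers, and then by Proposition~\ref{prop-CompImpHcomp} it is compact. There is no genuine obstacle, as the statement is a direct consequence of the two cited results; the only point to confirm is that both may be applied inside $\aca$, which is immediate since $\aca$ proves $\wkl$ and also proves the equivalence of the two compactness notions.
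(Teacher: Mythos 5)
Your proposal is correct and is essentially identical to the paper's own proof: the paper also derives the corollary by noting that $\aca$ proves $\wkl$ (so Lemma~\ref{lem-fwWKL} applies) and that $\aca$ proves compactness and compactness w.r.t.\ honest open covers coincide, which is the forward direction of Proposition~\ref{prop-CompImpHcomp}. No gaps to report.
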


\begin{proof}
$\aca$ proves $\wkl$ and that a countable second-countable space is compact if and only if it is compact w.r.t.\ honest open covers.
\end{proof}

We now give the reversals.  The strategy is as follows.  First, recall the \emph{Kleene-Brouwer ordering} of finite sequences:  $\sigma \leqKB \tau$ if either $\sigma$ is an extension of $\tau$ or $\sigma$ is to the left of $\tau$.  That is, $\sigma \leqKB \tau$ if
\begin{align*}
\sigma \supseteq \tau \orr (\exists n < \min(|\sigma|, |\tau|))[\sigma(n) < \tau(n) \andd (\forall i < n)(\sigma(i) = \tau(i))].
\end{align*}
Now, let $T \subseteq \Nb^{<\Nb}$ be an infinite finitely-branching tree.  In the case of $\aca$, we show that the order topology of $(T, \leKB)$ is discrete, hence not compact.  In the case of $\wkl$, we additionally assume that $T$ is bounded, and we show that the order topology of $(T, \leKB)$ is effectively discrete, hence not compact w.r.t.\ honest open covers.  In both cases, we conclude that $(T, \leKB)$ is not complete, which lets us extract an infinite path through $T$ from a witnessing partition.  The idea of last step of this this strategy, to use a certain partition of a linear order on $T$ to find a path through $T$, also appears in Simpson and Yokoyama's analysis of Peano categoricity~\cite{SimpsonYokoyama}.

\begin{Lemma}[$\rca$]\label{lem-discreteKB}
Let $T \subseteq \Nb^{<\Nb}$ be an infinite, finitely-branching tree, and for each $\sigma \in T$, let $T_{\sigma} = \{\tau \in T : \tau \supseteq \sigma\}$ denote the full subtree of $T$ above $\sigma$.  Assume that every $T_\sigma$ has a $\leKB$-least element.  Then the order topology of $(T, \leKB)$ is discrete.  Moreover, if $T$ is bounded, then the order topology of $(T, \leKB)$ is effectively discrete.
\end{Lemma}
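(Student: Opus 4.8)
The plan is to prove that in the Kleene--Brouwer order every point of $T$ is isolated, by exhibiting for each $\sigma \in T$ its immediate $\leKB$-predecessor and immediate $\leKB$-successor; the singleton $\{\sigma\}$ is then exactly the basic open interval between them. I would first recall the three basic features of $\leKB$ on a tree: a proper extension of $\sigma$ is $\leKB$-below $\sigma$, a sequence strictly to the left of $\sigma$ is $\leKB$-below $\sigma$, and the root $\la\ra$ is the $\leKB$-greatest element of $T$. For $\sigma \in T$ let $\lambda(\sigma)$ denote the $\leKB$-least element of $T_\sigma$, which exists by hypothesis. Since any child of $\lambda(\sigma)$ lying in $T_\sigma$ would be strictly $\leKB$-smaller, $\lambda(\sigma)$ is a leaf (informally, the leftmost leaf below $\sigma$). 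In particular $\lambda(\la\ra)$ is the $\leKB$-least element of $T$, and since $T$ is infinite, $\lambda(\la\ra) \neq \la\ra$.

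Next I would pin down the two adjacency functions $p$ and $s$. For the successor of $\sigma \neq \la\ra$, write $\sigma = \tau\smf\la m \ra$: if $\tau$ has a child $\tau\smf\la m' \ra \in T$ with $m' > m$, then taking the least such $m'$ the immediate successor is $s(\sigma) = \lambda(\tau\smf\la m' \ra)$; otherwise, when $\sigma$ is the rightmost child of $\tau$, the immediate successor is $s(\sigma) = \tau$. For the predecessor: if $\sigma$ is not a leaf, then $p(\sigma)$ is its rightmost child $\sigma\smf\la m_{\max} \ra$; and if $\sigma$ is a leaf, then $p(\sigma)$ is found by walking up the path to the nearest ancestor possessing a smaller sibling and taking the largest such sibling (whose own subtree has that sibling as $\leKB$-greatest element). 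Each of these is a routine verification that no element of $T$ lies strictly between the stated pair, organized by cases on the position at which a putative intermediate element first diverges from $\sigma$ and using only the three features of $\leKB$ above. Granting them, $\{\sigma\} = (p(\sigma), s(\sigma))$, with the conventions $p(\sigma) = -\infty$ when $\sigma = \lambda(\la\ra)$ and $s(\sigma) = \infty$ when $\sigma = \la\ra$. Because $T$ is finitely branching, for each fixed $\sigma$ a bounded search locates the relevant children and siblings, and the hypothesis supplies $\lambda$ of the relevant sibling; hence every $\{\sigma\}$ is open and $(T, \leKB)$ is discrete.

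For the moreover clause I would fix a branching bound $g$. Then leaf-testing and the location of children, rightmost children, and least or largest siblings all become uniformly computable, so the only ingredient not visibly a function is $\lambda$. I would compute $\lambda(\nu)$ by leftmost descent: starting from $\nu$, repeatedly pass to the child with least label, searching only labels below $g$ of the current length, and halt when a leaf is reached. The map $i \mapsto \rho_i$ tracing this descent is defined by primitive recursion and hence available in $\rca$; setting $\lambda(\nu)$ to be the value at the least stage at which a leaf appears (found by the $\Delta^0_1$ least-number principle) yields $\lambda$ as a total computable function. Then $d(\sigma) = \la p(\sigma), s(\sigma) \ra$ is computable and witnesses effective discreteness.

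The main obstacle is proving that the leftmost descent halts, which is precisely where the hypothesis on $\leKB$-least elements is consumed. I would argue by contradiction: if the descent from $\nu$ never reached a leaf, then $i \mapsto \rho_i$ would assemble into an infinite path $f$ through $T_\nu$, and because $f$ always chooses the least available label, every $\tau \in T_\nu$ either lies on $f$ or first diverges from $f$ by moving strictly to its right; in either case some $f\rst m \leKB \tau$. Thus no element of $T_\nu$ could be $\leKB$-least, since each is strictly $\leKB$-above some $f\rst m$, contradicting the hypothesis. The delicate points to get right in $\rca$ are that boundedness is exactly what keeps each descent step and each leaf-test $\Delta^0_1$, so that the would-be path $f$ can legitimately be formed as a function, together with the bookkeeping of the divergence argument; the two extreme points $\la\ra$ and $\lambda(\la\ra)$ are then disposed of as the evident one-sided special cases.
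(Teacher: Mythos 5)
Your proposal is correct and takes essentially the same route as the paper's own proof: immediate $\leqKB$-successors come from the parent or the $\leqKB$-least element (leftmost leaf) of the least right sibling's subtree, immediate $\leqKB$-predecessors come from rightmost children or the nearest left sibling up the path, singletons are then basic open intervals, and in the bounded case all of this is computed from the bound via leftmost descent. Your explicit termination argument for that descent (an infinite path would contradict the existence of $\leqKB$-least elements) spells out a point the paper leaves implicit, but the decomposition and all key steps coincide.
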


\begin{proof}
The lemma follows from two claims.

\begin{Claim}
If $\sigma \in T$ is not $\leKB$-least, then $\sigma$ has a $\leKB$-immediate predecessor.  If $T$ is bounded, then the $\leKB$-immediate predecessor can be found effectively.  That is, if $T$ is bounded, then there is a function $\ell \colon T \imp T \cup \{-\infty\}$ such that
\begin{align*}
\ell(\sigma) =
\begin{cases}
\textup{$\sigma$'s $\leKB$-immediate predecessor} & \textup{if $\sigma$ is not $\leKB$-least}\\
-\infty & \textup{if $\sigma$ is $\leKB$-least}.
\end{cases} 
\end{align*}
\end{Claim}

\begin{proof}[Proof of claim]
Consider a $\sigma \in T$ that is not $\leKB$-least.  If $\sigma$ is not a leaf, then its rightmost child is its $\leKB$-immediate predecessor.  In this case, the rightmost child exists because $T$ is finitely-branching.

Suppose that $\sigma$ is a leaf.  Let $i < |\sigma|$ be greatest such that for some $m < \sigma(i)$, $(\sigma \rst i)^\smf m \in T$.  Such an $i$ exists by the assumption that $\sigma$ is not $\leKB$-least (and that $\sigma$ is a leaf).  Given this greatest $i$, let $m < \sigma(i)$ be greatest such that $(\sigma \rst i)^\smf m \in T$.  Then this $\tau = (\sigma \rst i)^\smf m$ is $\sigma$'s $\leKB$-immediate predecessor.  To see this, suppose that $\alpha \leKB \sigma$ for some $\alpha \in T$.  Let $j$ be such that $\alpha \rst j = \sigma \rst j$ and $\alpha(j) < \sigma(j)$.  Then $j \leq i$ by the maximality of $i$.  If $j = i$, then $\alpha \leqKB \tau$ by the maximality of $m$.  If $j < i$, then $\alpha \leKB \tau$ because $\alpha \rst j = \sigma \rst j = \tau \rst j$ but $\alpha(j) < \sigma(j) = \tau(j)$.

If $T$ is bounded by $f$, then $f$ can be used to determine whether or not $\sigma$ is a leaf and, if not, determine $\sigma$'s rightmost child.  If $\sigma$ is a leaf, no further use of $f$ is required to produce the $\leKB$-immediate predecessor $\tau$ because in this case $\sigma$ itself provides the necessary bounds.
\end{proof}

\begin{Claim}
If $\sigma \in T$ is not $\leKB$-greatest (i.e., if $\sigma \neq \emptyset$), then $\sigma$ has a $\leKB$-immediate successor.  If $T$ is bounded, then the $\leKB$-immediate successor can be found effectively.  That is, if $T$ is bounded, then there is a function $r \colon T \imp T \cup \{\infty\}$ such that
\begin{align*}
r(\sigma) =
\begin{cases}
\textup{$\sigma$'s $\leKB$-immediate successor} & \textup{if $\sigma \neq \emptyset$}\\
\infty & \textup{if $\sigma = \emptyset$}.
\end{cases} 
\end{align*}

\end{Claim}

\begin{proof}[Proof of claim]
Here we use the \emph{ad hoc} notation `$\alpha \bang i$' to denote the sequence obtained by changing the last entry of $\alpha \neq \emptyset$ to $i$.  

Consider a $\sigma \in T$ that is not $\emptyset$.  If there is no $m > \sigma(|\sigma|-1)$ such that $\sigma \bang m \in T$, then $\sigma \rst (|\sigma|-1)$ is $\sigma$'s $\leKB$-immediate successor.

If there is an $m > \sigma(|\sigma|-1)$ such that $\sigma \bang m \in T$, then let $m$ be the least such $m$.  Then $\sigma$'s $\leKB$-immediate successor is the $\leKB$-least element $\tau$ of $T_{\sigma \bang m}$, which exists by assumption.  To see this, suppose that $\alpha \geKB \sigma$ for some $\alpha \in T$.  If $\alpha \subsetneq \sigma$, then $\alpha \subsetneq \sigma \bang m \subseteq \tau$, so $\alpha \geKB \tau$.  Otherwise, there is a $j$ such that $\alpha \rst j = \sigma \rst j$ and $\alpha(j) > \sigma(j)$.  If $j < |\sigma|-1$, then $\alpha \geKB \sigma \bang m \geqKB \tau$.  If $j = |\sigma|-1$, then either $\alpha(j) > m$ or $\alpha(j) = m$.  If $\alpha(j) > m$, then again $\alpha \geKB \sigma \bang m \geqKB \tau$.  If $\alpha(j) = m$, then $\alpha \in T_{\sigma \bang m}$, so $\alpha \geqKB \tau$ by the choice of $\tau$.

If $T$ is bounded by $f$, then $f$ can be used to determine whether or not there is an $m > \sigma(|\sigma|-1)$ with $\sigma \bang m \in T$.  Furthermore, $f$ can be used to find the $\leKB$-least element of any subtree $T_\eta$.  The $\leKB$-least element of $T_\eta$ is the leftmost leaf of $T_\eta$, which can be found by starting at $\eta$ and following the leftmost child until reaching a leaf.  The bound $f$ can be used to determine whether or not a given element of $T$ is a leaf, so this search is effective.
\end{proof}

Consider now the order topology of $(T, \leKB)$, and consider a $\sigma \in T$.  If $\sigma$ is neither $\leKB$-least nor $\leKB$-greatest, then $\sigma$ has a $\leKB$-immediate predecessor $\tau$ and a $\leKB$-immediate successor $\eta$.  In this case, $\{\sigma\} = (\tau, \eta)$ is a basic open set.  If $\sigma$ is $\leKB$-least, then $\sigma$ has a $\leKB$-immediate successor $\eta$.  In this case, $\{\sigma\} = (-\infty, \eta)$ is a basic open set.  If $\sigma$ is $\leKB$-greatest, then $\sigma$ has a $\leKB$-immediate predecessor $\tau$.  In this case $\{\sigma\} = (\tau, \infty)$ is a basic open set.  Thus the order topology of $(T, \leKB)$ is discrete.  Furthermore, if $T$ is bounded, then $\{\sigma\} = (\ell(\sigma), r(\sigma))$ for every $\sigma \in T$.  Thus the order topology of $(T, \leKB)$ is effectively discrete via the function $d(\sigma) = \la \ell(\sigma), r(\sigma) \ra$.
\end{proof}

\begin{Lemma}[$\rca$]\label{lem-ExtractPath}
Let $T \subseteq \Nb^{<\Nb}$ be an infinite finitely-branching tree.  If the linear order $(T, \leKB)$ is not complete, then $T$ has an infinite path.
\end{Lemma}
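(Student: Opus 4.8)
The plan is to prove the statement directly from a non-completeness witness by constructing an infinite path through $T$ via an explicit recursion. First I extract the cut: since $(T, \leKB)$ is not complete, fix a partition $T = A^- \cup A^+$ with $(\forall x \in A^-)(\forall y \in A^+)(x \leKB y)$ such that $A^-$ has no $\leKB$-greatest element and $A^+$ has no $\leKB$-least element. I will only use $A^+$, together with three of its properties, all immediate in $\rca$: it is upward closed (if $y \in A^+$ and $y \leqKB z$ then $z \in A^+$, as otherwise $z \in A^-$ forces $z \leKB y$); it is nonempty and contains $\emptyset$ (the root is the $\leKB$-greatest element of $T$, and $A^+ = \emptyset$ would give $A^-$ the greatest element $\emptyset$); and it has no $\leKB$-least element.

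The construction rests on two elementary observations about the Kleene--Brouwer order. For a child $\eta^\smf m \in T$, the node $\eta^\smf m$ is the $\leKB$-greatest element of $T_{\eta^\smf m}$, so $\eta^\smf m \in A^+$ if and only if $T_{\eta^\smf m}$ meets $A^+$; and among siblings the $\leKB$ order agrees with the order of last entries, so every element of $T_{\eta^\smf m}$ is $\leKB$ every element of $T_{\eta^\smf {m'}}$ when $m < m'$. I call a node $\sigma \in T$ of length $n$ a \emph{run} if for every $k < n$ the entry $\sigma(k)$ is the \emph{least} value $m$ with $(\sigma \rst k)^\smf m \in T \cap A^+$. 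Being a run is $\Delta^0_1$ (all quantifiers are bounded by the code of $\sigma$), runs of a given length are unique, and every run lies in $A^+$. The intended path is the union of all runs, so the only real task is to show that runs exist of every length, i.e.\ that the recursion never gets stuck.

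This totality statement is the main obstacle, because the natural propagating invariant — that the endpoint $\sigma$ of a run satisfies ``$A^+ \cap T_\sigma$ has no $\leKB$-least element,'' which is exactly what forces some child of $\sigma$ into $A^+$ — is $\Pi^0_2$, and so cannot be carried along by the $\Sigma^0_1$ or $\Pi^0_1$ induction available in $\rca$. I sidestep induction on this invariant as follows. Suppose a run $\sigma$ gets stuck, meaning $\sigma$ has no child in $T \cap A^+$; since any proper extension $\tau \supsetneq \sigma$ in $A^+$ would force its child $\sigma^\smf{\tau(|\sigma|)}$ into $A^+$, this means $A^+ \cap T_\sigma = \{\sigma\}$, so $\sigma$ is the $\leKB$-least element of $A^+ \cap T_\sigma$. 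I claim $\sigma$ is then the $\leKB$-least element of all of $A^+$, contradicting the no-least-element property. To see this, take any $\nu \in A^+$ and let $\sigma \rst k$ be the longest initial segment of $\sigma$ with $\sigma \rst k \subseteq \nu$. If $\sigma \rst k = \sigma$ then $\nu \in A^+ \cap T_\sigma = \{\sigma\}$; if $\nu = \sigma \rst k$ then $\sigma \supseteq \nu$ gives $\nu \geqKB \sigma$. Otherwise $\nu$ branches off at level $k$ with value $v = \nu(k) \neq \sigma(k)$. The value $v < \sigma(k)$ is impossible: then $(\sigma \rst k)^\smf v \subseteq \nu$ lies in $A^+$, a child in $A^+$ with last entry below $\sigma(k)$, contradicting that the run chose the \emph{leftmost} child in $A^+$. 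Hence $v > \sigma(k)$, so $\nu$ lies in a sibling subtree entirely $\leKB$-above $T_{\sigma \rst (k+1)} \ni \sigma$, giving $\nu \geqKB \sigma$. In every case $\nu \geqKB \sigma$, proving the claim.

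It remains to assemble the function in $\rca$. The statement ``a run of length $n$ exists'' is $\Sigma^0_1$; were it to fail for some $n$, the $\Pi^0_1$ least-element principle yields the least such $n$, hence a run of length $n-1$ that is stuck, and the claim of the previous paragraph produces a $\leKB$-least element of $A^+$, a contradiction. So runs exist of every length. Since runs of a fixed length are unique, the set of pairs $(n,m)$ such that some run of length $n+1$ has $n$\textsuperscript{th} entry $m$, and its complement, are both $\Sigma^0_1$; hence this set is $\Delta^0_1$ and defines a function $f \colon \Nb \imp \Nb$ by $\Delta^0_1$-comprehension. Each $f \rst n$ is a run and so lies in $T$, so $f$ is an infinite path through $T$. (Finite-branchingness of $T$ is not needed anywhere in this argument.)
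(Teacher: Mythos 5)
Your proof is correct and takes essentially the same approach as the paper: your \emph{runs} are exactly the paper's set $X$ of leftmost-in-$A^+$ sequences, and your stuck-run claim is just the contrapositive of the paper's $\Sigma^0_1$-inductive step, in which any $\tau \in A^+$ with $\tau \leKB \sigma$ is shown to extend the run $\sigma$ properly (it cannot branch off to the left without violating the leftmost choice), yielding a child of $\sigma$ in $A^+$. The only cosmetic difference is that you drive totality via the $\Pi^0_1$ least element principle (available in $\rca$, being a standard equivalent of $\iso$) rather than the paper's direct $\Sigma^0_1$ induction.
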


\begin{proof}
Let $A^-$ and $A^+$ witness that $(T, \leKB)$ is not complete.  Notice that $A^+$ is non-empty (because $\emptyset \in A^+$, as otherwise it would be the greatest element of $A^-$) and that $A^+$ is closed under initial segments (because it is $\leKB$-upward closed).  Define a set $X \subseteq \Nb^{<\Nb}$ by putting $\sigma \in X$ if and only if 
\begin{align*}
\sigma \in A^+ \andd (\forall i < |\sigma|)(\forall n < \sigma(i))[(\sigma \rst i)^\smf n \notin A^+].
\end{align*}
For every $n$, the set $X$ contains at most one sequence of length $n$, which can be seen from the definition of $X$ and the fact that $A^+$ is closed under initial segments.  Using $\iso$, we show that, for every $n$, $X$ contains at least one sequence of length $n$.  For the base case, $\emptyset$ is a sequence in $X$ of length $0$.  For the inductive case, suppose that $X$ contains a sequence $\sigma$ of length $n$.  Then $\sigma \in A^+$ and, as $A^+$ has no $\leKB$-least element, there must be a sequence $\tau \in A^+$ with $\tau \leKB \sigma$.  The sequence $\tau$ cannot be to the left of $\sigma$ because this would contradict $\sigma \in X$.  Therefore it must be that $\tau \supseteq \sigma$.  It follows that there is an $m$ such that $\sigma^\smf m \in A^+$.  Let $m$ be least such that $\sigma^\smf m \in A^+$.  Then $\sigma^\smf m$ is in $X$ and has length $n+1$.  This completes the induction.

The set $X$ thus contains exactly one sequence of each length.  As $X$ is closed under initial segments, it follows that if $\sigma, \tau \in X$ are such that $|\sigma| \leq |\tau|$, then $\sigma \subseteq \tau$.  Thus if we let $\sigma_i$ be the sequence in $X$ of length $i$, then $\sigma_0 \subseteq \sigma_1 \subseteq \sigma_2 \subseteq \cdots$ defines an infinite path through $T$.
\end{proof}

\begin{Theorem}{\ }
\begin{enumerate}[(i)]
\item\label{it-CompWKL} The statement ``for every countable linear order $(L, \prec)$, if $(L, \prec)$ is complete, then the order topology of $(L, \prec)$ is compact w.r.t.\ honest open covers'' is equivalent to $\wkl$ over $\rca$.

\smallskip

\item\label{it-CompACA} The statement ``for every countable linear order $(L, \prec)$, if $(L, \prec)$ is complete, then the order topology of $(L, \prec)$ is compact'' is equivalent to $\aca$ over $\rca$.
\end{enumerate}
\end{Theorem}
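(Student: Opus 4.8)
The forward directions are already established: Lemma~\ref{lem-fwWKL} gives the forward implication of part~\ref{it-CompWKL}, and Corollary~\ref{cor-fwACA} gives the forward implication of part~\ref{it-CompACA}. So the plan is to prove the two reversals, and for these I would show that the compactness statement of part~\ref{it-CompWKL} implies bounded K\"onig's lemma and that the compactness statement of part~\ref{it-CompACA} implies K\"onig's lemma; the equivalences then follow from Theorem~\ref{thm-KLequivs}. Both reversals run on the same template, applied to the Kleene--Brouwer order $(T, \leKB)$ of an infinite finitely-branching tree $T$ (taken bounded in the $\wkl$ case), and both conclude by using Lemma~\ref{lem-ExtractPath} to convert a failure of completeness into an infinite path.

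For the reversal of part~\ref{it-CompWKL}, I would assume the compactness statement and let $T \subseteq \Nb^{<\Nb}$ be an infinite bounded tree, the goal being to produce an infinite path, whence bounded K\"onig's lemma and hence $\wkl$ by Theorem~\ref{thm-KLequivs}. I would split into two cases according to whether or not every full subtree $T_\sigma$ has a $\leKB$-least element. If every $T_\sigma$ has a $\leKB$-least element, then the hypotheses of Lemma~\ref{lem-discreteKB} are met and $(T, \leKB)$ is effectively discrete; since $T$ is infinite, the ordered space is then not compact w.r.t.\ honest open covers, so the assumed statement in contrapositive form yields that $(T, \leKB)$ is not complete, and Lemma~\ref{lem-ExtractPath} extracts an infinite path. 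If instead some $T_\sigma$ has no $\leKB$-least element, then following the leftmost branch above $\sigma$ (repeatedly passing to the least child) never reaches a leaf, since reaching a leaf would exhibit the $\leKB$-least element of $T_\sigma$; this search therefore defines an infinite path through $T$ directly, and the boundedness of $T$ makes the search a total, hence $\rca$-definable, function. Either way an infinite path is obtained.

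The reversal of part~\ref{it-CompACA} is identical in structure, now starting from an infinite finitely-branching (not necessarily bounded) tree $T$ and aiming at K\"onig's lemma, hence $\aca$ by Theorem~\ref{thm-KLequivs}. The only changes are that in the first case Lemma~\ref{lem-discreteKB} now yields plain discreteness of $(T, \leKB)$ rather than effective discreteness, so the infinite space is merely not compact, and one correspondingly invokes the stronger statement of part~\ref{it-CompACA} to deduce the failure of completeness; in the second case, finding the least child of a node is still effective because $T$ is finitely-branching, so the leftmost-branch search again yields a path whenever some $T_\sigma$ lacks a $\leKB$-least element.

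The main point to get right, and the reason the two parts land on different systems, is the matching between the flavour of discreteness and the flavour of compactness. Effective discreteness is exactly what lets the cover of $(T, \leKB)$ by singletons be presented honestly, via the uniform witnessing function $d$ produced in Lemma~\ref{lem-discreteKB}, so it defeats compactness w.r.t.\ honest open covers and only needs the weaker statement, which is why part~\ref{it-CompWKL} sits at $\wkl$; without a bound on $T$ one obtains only plain discreteness, the singleton cover can only be exhibited as a sequence of individually open enumerable sets, and defeating this weaker form of compactness requires the stronger statement, pushing part~\ref{it-CompACA} up to $\aca$. The subtler bookkeeping is the case split itself: it is a classical disjunction on whether every $T_\sigma$ has a $\leKB$-least element, and I would need to check that each branch produces the path inside $\rca$, the first branch by confirming that the hypotheses of Lemma~\ref{lem-discreteKB} genuinely hold so that its conclusion is available, and the second branch by confirming that the leftmost-child search is total precisely because its nontermination is what furnishes the path.
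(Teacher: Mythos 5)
Your proposal is correct, and its overall architecture is exactly the paper's: forward directions by Lemma~\ref{lem-fwWKL} and Corollary~\ref{cor-fwACA}, reversals by reducing to bounded K\"onig's lemma and K\"onig's lemma via Theorem~\ref{thm-KLequivs}, the case split on whether every $T_\sigma$ has a $\leKB$-least element, and in the main case the correct pairing of effective discreteness against honest covers (for $\wkl$) and plain discreteness against plain compactness (for $\aca$), followed by Lemma~\ref{lem-ExtractPath}.

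The one place you genuinely depart from the paper is the degenerate case in which some $T_\sigma$ has no $\leKB$-least element. The paper dispatches it with no new work: the partition $A^- = \emptyset$, $A^+ = T_\sigma$ witnesses that $(T_\sigma, \leKB)$ is not complete ($A^-$ vacuously has no greatest element, and $A^+$ has no least element by hypothesis), so Lemma~\ref{lem-ExtractPath} applies a second time. You instead build the leftmost branch above $\sigma$ by hand. That construction does work in $\rca$: its correctness rests on precisely the fact you cite --- a leftmost leaf of $T_\sigma$ would be its $\leKB$-least element, which is the same observation the paper uses inside the proof of Lemma~\ref{lem-discreteKB} --- and one formalizes it by $\Sigma^0_1$ induction on the length of the leftmost branch (each node reached is a non-leaf, else it would be $\leKB$-least, contradicting the case hypothesis) followed by $\Delta^0_1$ comprehension to assemble the path. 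However, your stated reasons for effectiveness are not the right ones: finding the \emph{least} child $m$ with $\tau^\smf\la m \ra \in T$ is an unbounded search whose termination has nothing to do with $T$ being finitely branching (that would matter for the rightmost child), nor does it need a bound; it terminates exactly because the case hypothesis guarantees the current node has a child. So your argument is sound once that justification is repaired, but the paper's reuse of Lemma~\ref{lem-ExtractPath} is cleaner, since it avoids this extra recursion-theoretic bookkeeping entirely; what your variant buys is only that it makes the leftmost-branch picture explicit.
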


\begin{proof}
The forward direction of item~(\ref{it-CompWKL}) is Lemma~\ref{lem-fwWKL}, and the forward direction of item~(\ref{it-CompACA}) is Corollary~\ref{cor-fwACA}.

For the reversal of item~(\ref{it-CompACA}), let $T \subseteq \Nb^{< \Nb}$ be an infinite finitely-branching tree.  By Theorem~\ref{thm-KLequivs} item~(\ref{it-fullKL}), it suffices to show that $T$ has an infinite path.  Recall that, for $\sigma \in T$, $T_{\sigma} = \{\tau \in T : \tau \supseteq \sigma\}$ denotes the full subtree of $T$ above $\sigma$.  Suppose that there is a $\sigma \in T$ such that $T_\sigma$ has no $\leKB$-least element.  Then setting $A^- = \emptyset$ and $A^+ = T_\sigma$ gives a partition of $T_\sigma$ witnessing that the linear order $(T_\sigma, \leKB)$ is not complete.  By Lemma~\ref{lem-ExtractPath}, there is an infinite path through $T_\sigma$, which is an infinite path through $T$.  Suppose instead that $T_\sigma$ has a $\leKB$-least element for every $\sigma \in T$.  Then the order topology of $(T, \leKB)$ is discrete by Lemma~\ref{lem-discreteKB}.  Therefore the order topology of $(T, \leKB)$ is not compact.  We assume that the order topology of a complete linear order is compact, so $(T, \leKB)$ is not complete.  Therefore $T$ has an infinite path by Lemma~\ref{lem-ExtractPath}.

The reversal of item~(\ref{it-CompWKL}) is analogous.  Let $T \subseteq \Nb^{< \Nb}$ be an infinite bounded tree.  By Theorem~\ref{thm-KLequivs} item~(\ref{it-bddKL}), it suffices to show that $T$ has an infinite path.  As above, if there is a $\sigma \in T$ such that $T_\sigma$ has no $\leKB$-least element, then the linear order $(T_\sigma, \leKB)$ is not complete, so $T_\sigma$, and therefore $T$, has an infinite path by Lemma~\ref{lem-ExtractPath}.  If $T_\sigma$ has a $\leKB$-least element for every $\sigma \in T$, then the order topology of $(T, \leKB)$ is effectively discrete by Lemma~\ref{lem-discreteKB}.  Therefore the order topology of $(T, \leKB)$ is not compact w.r.t.\ honest open covers.  Therefore the linear order $(T, \leKB)$ is not complete.  Therefore $T$ has an infinite path by Lemma~\ref{lem-ExtractPath}.
\end{proof}

\section*{Acknowledgments}

We thank Fran\c{c}ois Dorais and Giovanni Sold\`a for helpful discussions.  This project was partially supported by a grant from the John Templeton Foundation (``A new dawn of intuitionism: mathematical and philosophical advances'' ID 60842).  The opinions expressed in this work are those of the author and do not necessarily reflect the views of the John Templeton Foundation.

\bibliography{ShaferCompactCompleteLO}

\vfill

\end{document}